\newcommand{\vb}{V^{\mathrm{B}}} 
\newcommand{\gb}{\gamma^{\mathrm{B}}}
\newcommand{\C}{\mathbb{C}} 
\newcommand{\N}{\mathbb{N}}
\newcommand{\R}{\mathbb{R}}
\newcommand{\norm}[1]{\left\lVert #1 \right\rVert}
\newcommand{\normm}[1]{{\left\vert\kern-0.25ex\left\vert\kern-0.25ex\left\vert #1 \right\vert\kern-0.25ex\right\vert\kern-0.25ex\right\vert}} 
\newcommand{\br}[1]{\left\langle #1 \right\rangle}
\newcommand{\ol}[1]{\overline{#1}}
\newcommand{\1}{\mathds{1}}
\newcommand{\p}{\partial}
\newcommand{\dd}{\mathrm{d}}
\newcommand{\loc}{\mathrm{loc}}
\newcommand{\rad}{\mathrm{rad}}
\let\Re\relax
\DeclareMathOperator{\Re}{\mathrm{Re}}
\DeclareMathOperator{\Id}{\mathrm{Id}}
\DeclareMathOperator{\Spec}{\mathrm{Spec}}
\DeclareMathOperator{\Div}{div}
\newtheorem{proposition}{Proposition}[section]
\newtheorem{lemma}[proposition]{Lemma}
\newtheorem{theorem}[proposition]{Theorem}
\newtheorem{main-theorem}{Theorem}
\newtheorem{main-corollary}[main-theorem]{Corollary}
\newtheorem{main-algorithm}[main-theorem]{Algorithm}
\theoremstyle{definition}
\newtheorem{remark}[proposition]{Remark}
\numberwithin{equation}{section}
\newcommand{\ka}{\kappa}
\newcommand{\la}{\lambda} 
\newcommand{\La}{\Lambda} 
\newcommand{\ga}{\gamma} 
\newcommand{\om}{\omega}
\newcommand{\IB}{\mathbb{B}} 
\newcommand{\IC}{\mathbb{C}}
\newcommand{\IN}{\mathbb{N}}
\newcommand{\IR}{\mathbb{R}} 
\newcommand{\IS}{\mathbb{S}}
\newcommand{\cA}{\mathcal{A}}
\newcommand{\cC}{\mathcal{C}}
\newcommand{\cG}{\mathcal{G}}
\newcommand{\cK}{\mathcal{K}}
\newcommand{\cL}{\mathcal{L}}
\newcommand{\cN}{\mathcal{N}}
\newcommand{\cO}{\mathcal{O}}
\newcommand{\gH}{\mathfrak{H}}
\newcommand{\dL}{\Lambda} 
\newcommand{\lp}{\lambda^{\mathrm{S}}} 
\newcommand{\Lp}{\Lambda^{\mathrm{S}}}
\title[The spectrum of DtN maps for radial conductivities]{The spectrum of Dirichlet-to-Neumann maps for radial conductivities}
\author[T. Daudé]{Thierry Daudé}
\address[TD]{Université Marie et Louis Pasteur, CNRS, LmB (UMR 6623), F-25000, Besan\c con, France. CNRS – Université de Montréal CRM – CNRS}
\email{thierry.daude@univ-fcomte.fr}
\author[F. Macià]{Fabricio Macià}
\address[FM]{M$^2$ASAI. Universidad Politécnica de Madrid, ETSI Navales, Avda. de la Memoria, 4, 28040, Madrid, Spain.}
\email{fabricio.macia@upm.es}
\author[C. Meroño]{Cristóbal Meroño}
\address[CM]{M$^2$ASAI. Universidad Politécnica de Madrid, ETSI Navales, Avda. de la Memoria, 4, 28040, Madrid, Spain.}
\email{cj.merono@upm.es}
\author[F. Nicoleau]{François Nicoleau}
\address[FN]{Laboratoire de Math\'ematiques Jean Leray, UMR CNRS 6629, 2 Rue de la Houssini\`ere BP 92208, F-44322 Nantes Cedex 03.}
\email{francois.nicoleau@univ-nantes.fr}
\begin{document}

\begin{abstract}
The problem of characterizing sequences of real numbers that arise as spectra of Dirichlet-to-Neumann (DtN) maps for elliptic operators has attracted considerable attention over the past fifty years. In this article, we address this question in the simple setting of DtN maps associated with a rotation-invariant elliptic operator $\Div(\gamma\nabla\cdot)$ in the ball in Euclidean space. We show that the spectrum of such a DtN operator can be expressed as a universal term, determined solely by the boundary values of the conductivity $\gamma$, plus a sequence of Hausdorff moments of an integrable function, which we call the Born approximation of $\gamma$. We also show that this object is locally determined from the boundary by the corresponding values of the conductivity, a property that implies a local uniqueness result for the Calderón Problem in this setting. We also give a stability result: the functional mapping the Born approximation to its conductivity is Hölder stable in suitable Sobolev spaces. Finally, in order to refine the characterization of the Born approximation, we analyze its regularity properties and their dependence on the conductivity.
\end{abstract}

\maketitle

\section{Introduction}

Let $\Omega\subset\R^d$, $d\geq 2$, be a smooth bounded domain and $\ga\in \cC(\ol{\Omega})$ a \textit{conductivity}, \textit{i.e.} a real-valued continuous function that is bounded below by a strictly positive constant: $\ga(x)\geq c>0$ for all $x\in\Omega$. The Dirichlet-to-Neumann map on $\p\Omega$ associated to $\ga$ is the operator $\Lambda_\ga$ that acts on functions $f\in\cC^\infty(\p\Omega)$ as
\begin{equation}
    \Lambda_\ga f := (\ga\p_\nu u)|_{\p\Omega},
\end{equation}
where $\nu$ is the outward normal unit vector field on $\partial\Omega$, $\p_{\nu} = \nu \cdot \nabla$ and $u$ is the solution to the \textit{conductivity equation}
\begin{equation}\label{e:cond_problem} 
\begin{cases}
\Div(\gamma \nabla u)  = 0,   &\text{ in }   \Omega\subset \R^d,\\
\hfill u  = f, &\text{ on }   \partial \Omega.
\end{cases}
\end{equation}
The operator $\Lambda_\ga$ is an unbounded and positive self-adjoint operator on $L^2(\p\Omega)$ with domain $H^1(\p\Omega)$.  

Understanding the structure of the class of all DtN maps $\Lambda_\ga$ as $\ga$ varies in some class of admissible conductivities is a challenging problem that has multiple implications in various fields: deriving fine properties of their spectrum is a central problem in spectral geometry \cite{ColbSurv24}, and the problem of recovering $\ga$ from the knowledge of $\Lambda_\ga$ is known as the Calderón inverse problem \cite{FSUBook}. 

In this article, we will present strong structural properties of the set of DtN maps in the presence of rotational symmetry. We will assume that $\Omega=\IB^d$ is the unit ball in Euclidean space, and we will be interested in conductivities $\ga$ that are invariant under the action of rotations, that is \textit{radial conductivities}.  

All Dirichlet-to-Neumann operators arising from radial conductivities in $\cC(\ol{\IB^d})$
have the same invariant subspaces, namely, the spaces of spherical harmonics of some fixed degree $k\in\IN_0:=\IN\cup\{ 0\}$. Recall that these are:
\begin{equation*}
    \gH_{k,d}:=\{ P|_{\IS^{d-1}}\,:\, P \text{ homogeneous, harmonic polynomial of degree }k\text{ on }\IR^d\}.
\end{equation*}
Therefore, denoting for $k\in\IN_0$
\begin{equation*}
    \Lambda_\ga|_{\gH_{k,d}}=\lambda_k[\ga]\Id_{\gH_{k,d}},
\end{equation*}
one has that $\Lambda_\ga$ is completely determined by its spectrum
\begin{equation*}
    \Spec(\Lambda_\ga)=\{\lambda_k[\ga]\,:\,k\in\IN_0\}.
\end{equation*}
In other words, characterizing radial DtN maps amounts to characterizing their spectra. 

Since constants are always solutions to \eqref{e:cond_problem}, one always has $\lambda_0[\ga]=0$. Also, if $\ga$ is identically $1$ in $\IB^d$, one has $\la_k[\ga] = k$, for all $k\in \N_0$. In general, we will show that the rest of the eigenvalues of a radial DtN map have a very rigid structure: they can be expressed in terms of moments of radial functions. For $g\in L^1(\IB^d)$ define
\begin{equation} \label{e:moments_ball}
    \sigma_k[g] := \frac{1}{|\IS^{d-1}|} \int_{\IB^d} g(x) |x|^{2k}\, dx,\qquad k\in\IN_0,
\end{equation}
where $|\IS^{d-1}|$ denotes the measure of the unit sphere. The following holds.
\begin{main-theorem} \label{mt:conductivity}
    Let $d\ge 2$, and let $\gamma \in W^{2,p}_\rad(\IB^d)$ \footnote{Throughout this article, the subscript $\rad$ added to some function space over $\IB^d$ will mean that we are considering the subspace of its radial elements.} with $d/2<p \le  \infty$ be a conductivity. 
    Then there exists a function $\gb \in W^{2,1}_\rad(\IB^d,\IR)\subset \cC^1(\ol{\IB^d} \setminus \{0\})$ such that
    \begin{equation}\label{e:specdtn}
        \lambda_k [\gamma] = \ga|_{\IS^{d-1}} k - \frac{1}{2}\p_\nu \ga|_{\IS^{d-1}} +  \frac{1}{2}\sigma_{k}[ \Delta \gb],  \qquad\forall k\in \N ,
    \end{equation}
    and 
    \begin{equation} \label{e:traces}
             \gb|_{\IS^{d-1}} = \gamma|_{\IS^{d-1}},  \qquad \qquad    \partial_\nu \gb|_{\IS^{d-1}} = \partial_\nu \gamma|_{\IS^{d-1}} .
    \end{equation}
    In addition, $\gb$ is uniquely determined from $\Lambda_\ga$ since \eqref{e:specdtn} and \eqref{e:traces} imply that $\gb$ is characterized by any of the following equivalent conditions.
    \begin{enumerate}[i)]
        \item (Solution to a moment problem) $\gb$ is the unique solution in $L^1_\rad(\IB^d)$ to
        \begin{equation}\label{e:mompb}
            \sigma_{k}[\gb]=\frac{\lambda_{k+1}[\ga]}{2(k+1)(k+1+\nu_d)},\qquad k\in\IN_0,\qquad \nu_d:=\frac{d-2}{2}.     
        \end{equation}
        \item (Fourier transform representation) The Fourier transform of $\gb$ is given by
        \begin{equation}  \label{e:gae_formula}
        \widehat{\gb}(\xi) :=  \int_{\IB^d}e^{-i\xi\cdot x} \gb(x)\,dx = \pi^{d/2}  \sum_{k=1}^\infty  \frac{ (-1)^{k-1}}{k! \Gamma(k+d/2)}
        \left(\frac{|\xi|}{2}\right)^{2k-2}   \lambda_{k}[\gamma],
        \end{equation}
        where the series above is absolutely convergent.
    \end{enumerate}    
\end{main-theorem}
The identities \eqref{e:specdtn} and \eqref{e:gae_formula} were already obtained in a formal sense in \cite{BCMM23_n}.
Since $\Delta \gb$ is radial, one can check that the terms $\sigma_k[\Delta\gb]$ are actually the \textit{Hausdorff moments} of a function $h_\ga\in L^1([0,1])$:
\begin{equation*}
    \sigma_k[\Delta\gb] = \int_0^1 h_\ga(t)t^k \,dt, \qquad k\in\IN_0.      
\end{equation*}
Sequences of Hausdorff moments have been completely characterized by Hausdorff \cite{Haus23} (see also \cite[Chapter~3]{widd41}); therefore \Cref{mt:conductivity} provides a partial characterization of the set of radial DtN maps. Condition \eqref{e:specdtn} is strictly necessary in order to characterize the spectrum of radial DtN maps: the set of functions $\gb$ obtained as $\ga$ varies among conductivities in $W^{2,\infty}_\rad(\IB^d)$ is strictly contained in $W^{2,1}_\rad(\IB^d)$. This follows from the work of Remling \cite{Remling03} on 1-d inverse spectral theory (see \Cref{r:characterization}). 

The function $\gb$ appearing in \Cref{mt:conductivity} is called the \textit{Born approximation} of $\ga$. It is a purely spectral object, in the sense that it is completely determined by $\Spec(\Lambda_\ga)$ as \eqref{e:gae_formula} shows. However, it is local in the sense that, in any annular neighborhood of the boundary $\p\IB^d$, it is uniquely determined by the values of the conductivity in that same neighborhood.

 We define the annulus 
 \begin{equation} \label{e:us_def}
     U_s := \{ x \in \IR^d\,:\,   s<|x|\le 1 \}, \qquad 0<s<1.
 \end{equation}
\begin{main-theorem} \label{mt:conductivity_uniqueness}
    Let $d\ge 2$ and $\gamma_1 ,\gamma_2 \in  W^{2,p}_\rad(\IB^d)$ with $d/2<p \le  \infty$ be two conductivities.  Let $0<s<1$. Then
    \begin{equation*}
        \gb_1|_{U_s} = \gb_2|_{U_s}  \quad 
        \Longleftrightarrow \quad
        \gamma_1|_{U_s} = \gamma_2|_{U_s}.
    \end{equation*}
\end{main-theorem}

Structural results for the spectra of DtN maps corresponding to general, smooth, non-radial conductivities can be obtained by a detailed analysis of (pseudodifferential or Berezin) symbols of DtN maps; see, for instance,  \cite{LagAm, Roz} for the two-dimensional case or \cite{PVU} for the ball in Euclidean space.  
However, if $\ga_1, \ga_2$ are two smooth conductivities that satisfy $\ga_1|_{U_s}=\ga_2|_{U_s}$ for some $s\in(0,1)$ it is known that $\Lambda_{\ga_1} - \Lambda_{\ga_2}$ is a pseudo-differential operator whose symbol --defined modulo smoothing operators-- is zero (in the radial context, this implies that $\lambda_k[\ga_1] - \lambda_k[\ga_2] = \cO(k^{-\infty})$ as $k\to\infty$).
Therefore, these approaches are unable to determine how the interior behavior of $\ga$ affects the spectrum.
In contrast, we observe that \Cref{mt:conductivity} provides a non-trivial description of the spectrum, even in the case where the conductivity $\ga$ is constant in a neighborhood of the boundary $\partial \IB^d$.
In fact, a straightforward consequence of Theorems \ref{mt:conductivity} and \ref{mt:conductivity_uniqueness} is the following. 
\begin{main-corollary}\label{c:specrig}
Let $d\geq 2$, $d/2<p \le  \infty$, and $s\in(0,1)$. For any two conductivities $\ga_1,\ga_2\in W^{2,p}_\rad(\IB^d)$ and any $k_0\in\IN$, the following are equivalent.
\begin{enumerate}[i)]
    \item $\ga_1|_{U_s}=\ga_2|_{U_s}$.
    \item There exists $C>0$ such that
    \begin{equation}\label{e:exps}
        |\lambda_k [\gamma_1]-\lambda_k [\gamma_2]|\leq Cs^{2k},\qquad \forall k\geq k_0.
    \end{equation}
\end{enumerate}
In particular, for every conductivity $\ga\in W^{2,p}_\rad(\IB^d)$,
\begin{equation*}
    \ga|_{U_s}=\ga|_{\IS^{d-1}} \quad \iff\quad  \lambda_k[\ga] = \ga|_{\IS^{d-1}}k + \cO(s^{2k}),\qquad k\to\infty.
\end{equation*}
\end{main-corollary}
This result extends \cite[Theorem~1.4]{DKN21_stability_steklov} to $d\ge 2$ and less regular conductivities in the Euclidean setting. Due to its local nature, this result is stronger than particularizing to the radial case the standard uniqueness results for the Calderón problem. We also notice that the proof of the corollary and the previous theorems does not involve the standard approach based on Complex Geometrical Optics solutions of the equation \eqref{e:cond_problem}. An analogous result for the case of DtN maps associated to radial Schrödinger operators is given in \Cref{thm:spa} in \Cref{sec:spa}.

\Cref{mt:conductivity_uniqueness} has a quantitative counterpart, as we now show.
Define for $N>0$, $K>0$ and $1\leq p\leq\infty$ the set $\cG_{K,N}^p$ consisting of those conductivities $\gamma\in  W^{2,p}_\rad(\IB^d)$ that satisfy
\begin{equation} \label{e:ga_conditions}
    K^{-1} \le \ga(x) \le K, \quad x\in\IB^d,\qquad  \norm{\ga}_{W^{2,p}(\IB^d)} \le N. 
\end{equation}
 The Born approximation $\gb$ determines a conductivity $\ga \in \cG_{K,N}^p \subset{W^{2,1}}(\IB^d)$ in a Hölder continuous way. 
\begin{main-theorem} \label{mt:stability-gamma}
    Let $d\ge 2$, $d/2<p < \infty$, $K>1$. There exist $N_0>0$ such that, for $N>N_0$, the following assertions hold:
    \begin{enumerate}[i)]
    \item For all $0<s<1$ there exist $0<\delta<1$ and $ C_{N,K,d,p,s}>0$ such that, for every $\ga_1,\ga_2\in\cG_{K,N}^p$ satisfying $\norm{\gb_1-\gb_2}_{W^{2,1}(\IB^d)} < \delta$, one has
        \begin{equation*}
        \norm{\ga_1-\ga_2}_{W^{2,1}(U_s)}\le  C_{N,K,d,p,s}\norm{\gb_1-\gb_2}_{W^{2,1}(U_s)}^{\frac{p-1}{2p-1}}.
        \end{equation*}
    \item   For all $1<q<p$, there exist $0<\delta<1$, $0<\alpha<1$ and $ C_{N,K,d,p,q}>0$ such that for every $\ga_1,\ga_2\in\cG_{K,N}^p$ satisfying $\norm{\gb_1-\gb_2}_{W^{2,1}(\IB^d)} < \delta$ one has
        \begin{equation*}
        \norm{\ga_1-\ga_2}_{W^{2,q}(\IB^d)}  \le  C_{N,K,d,p,q} \norm{\gb_1-\gb_2}_{W^{2,1}(\IB^d)}^\alpha.
        \end{equation*}
    \end{enumerate}
    
\end{main-theorem} 
The stability estimate i) shows that the Hölder modulus of continuity of the map $\gb \mapsto \ga$ only depends on $p$ in the annuli $U_s$. This local estimate can be extended to the global estimate ii) on $\IB^d$ if one allows the Hölder modulus of continuity to depend on $N,K,p,d$. By interpolation, one can also get the norm $W^{2,q}(\IB^d)$ in the LHS of the estimate i) paying the price of a worse Hölder modulus of continuity in the RHS. 

\Cref{mt:stability-gamma} shows that $\ga$ can be determined by $\gb$ in a Hölder stable way, which is not the case for the problem of recovering a radial $\ga$ from $\Spec(\Lambda_\ga)$: this problem is ill-posed (no continuity unless the conductivities lie in a compact set \cite{Alessandrini88,Ale_Cabib_08,Faraco_Kurylev_Ruiz_14} and the modulus of continuity is generally, at best, logarithmic \cite{Mand00}).
Therefore, this shows that the ill-posedness of reconstructing a radial $\ga$ from $\Spec(\Lambda_\ga)$ is due to the linear step of solving the moment problem \eqref{e:specdtn} or \eqref{e:mompb} to reconstruct $\gb$ (the moment problem is, indeed, a notoriously ill-posed problem).

This shows that the existence of the Born approximation proved in \Cref{mt:conductivity} provides a decomposition of the process of reconstructing $\ga$ from $\Lambda_\ga$ in two steps:
\begin{itemize}
    \item The first step (obtaining $\gb$ from $\Lambda_\ga$) is linear and amounts to solving the moment problem \eqref{e:mompb}.
    \item The second step (obtaining $\ga$ from $\gb$) is non-linear but enjoys good stability and locality properties, as \Cref{mt:stability-gamma} shows.
\end{itemize}
In other words, the existence of the Born approximation and this decomposition provide a stable factorization of the Calderón problem in the radial case.

It is possible to infer more precise information on the regularity of the Born approximation. 
In general, it is always a continuous function on $\IB^d\setminus\{0\}$ that can be eventually singular at the origin. 
The nature of this singularity is related to the existence of eigenvalues and/or resonances for a certain 1-d Schrödinger operator defined in terms of $\ga$, as will be shown in \Cref{sec:resonances}.

\begin{main-theorem}  \label{mt:gb_structure}
    Let $d\ge 2$ and let $\gamma \in W^{2,p}_\rad(\IB^d)$  with $d/2<p \le  \infty$ be a conductivity. If $\nu_d := (d-2)/2$, the following assertions hold.
    \begin{enumerate}[i)]
        \item If $d>2$, there exists a finite  set of real numbers $0<\ka_1<\dots<\ka_J <\nu_d$, $J\in \N_0$, (possibly empty) and constants $c_0\in \R,\, c_1,\cdots,c_J\geq 0$ such that
        \begin{equation*}
            \gb(x) = \gb_\mathrm{reg}(x) -c_0\log|x|  + \sum_{j=1}^J \frac{c_j }{|x|^{2\kappa_j}},
        \end{equation*}
        where $\gb_\mathrm{reg} \in  \cC(\ol{\IB^d})\cap W^{1,d}(\IB^d) \cap W^{2,1}_\rad(\IB^d)$. 
        \item If $d=2$ then  $\gb \in \cC(\ol{\IB^2})\cap W^{2,1}_\rad(\IB^2)$.
        \item $\gb|_{\IB^d\setminus \{0\}} \in W^{2,p}_\loc(\IB^d\setminus \{0\})$ and for every $m\geq 2$ and $0<s<1$ one has $\gb \in \cC^m(U_s)$ if and only if $\ga \in \cC^m(U_s)$.
    \end{enumerate}
 \end{main-theorem} 
 One can  compute the Born approximation of a conductivity explicitly in some special cases, see \Cref{sec:ex}; some of these explicit examples are shown in \cref{f:ex_2} and \cref{f:ex_1}. These are chosen  to illustrate that $\gb$ can have a logarithmic singularity when $d=3$ (\cref{f:ex_2}), and to illustrate  the continuity of the Born approximation (\cref{f:ex_1}) when $d=2$ (and $d=3$ in some special cases).

The  notion of Born approximation used in this work was introduced in \cite{BCMM22} and \cite{BCMM23_n} as a formal object satisfying \eqref{e:mompb} or \eqref{e:gae_formula}, in the context of the Calderón Problem. 
This problem goes back to Calderón \cite{calderon},\footnote{This reference has been reprinted in \cite{calderon_rep}.} who considered it during the fifties, and has been the subject of intense research in the last forty years, partly because of its applications to medical imaging techniques, such as Electrical Impedance Tomography. 
We refer the reader to the recent book \cite{FSUBook} in order to obtain a wider perspective on this problem as well as an up-to-date guide to the vast literature on the subject.

By analogy with scattering problems, the name of Born approximation is motivated by the fact that $\gb$ is a linearization of the Calderón inverse problem, as \eqref{e:mompb} shows ($\gb$ depends linearly on the spectrum). Similarly as was explained in detail in \cite{Radial_Born} for the case of DtN maps associated to Schrödinger operators, or more systematically in \cite{CMMS25}, one can show, as a consequence of \Cref{mt:conductivity}, that $\gb$ satisfies the identity
\begin{equation} \label{e:frechet_main}
     \La_\ga = \dd\Psi_1(\gb),
\end{equation}
where $\Psi: \ga \longmapsto \La_\ga$ and $\dd\Psi_1$ denotes the Fréchet differential of $\Psi$ at the constant conductivity $\ga =1$.  As far as we know this is the first time in the literature that \eqref{e:frechet_main} has been shown to have an exact solution. This is, in fact, a long standing open question in the field of numerical approaches to the Calderón problem, see \cite{CMMS25} and  \cite{HaSe2010} for more details.

    \begin{figure}[t]
        \centering
        \begin{subfigure}[t]{0.46\textwidth}
            \centering 
            \includegraphics[width=0.9\textwidth]{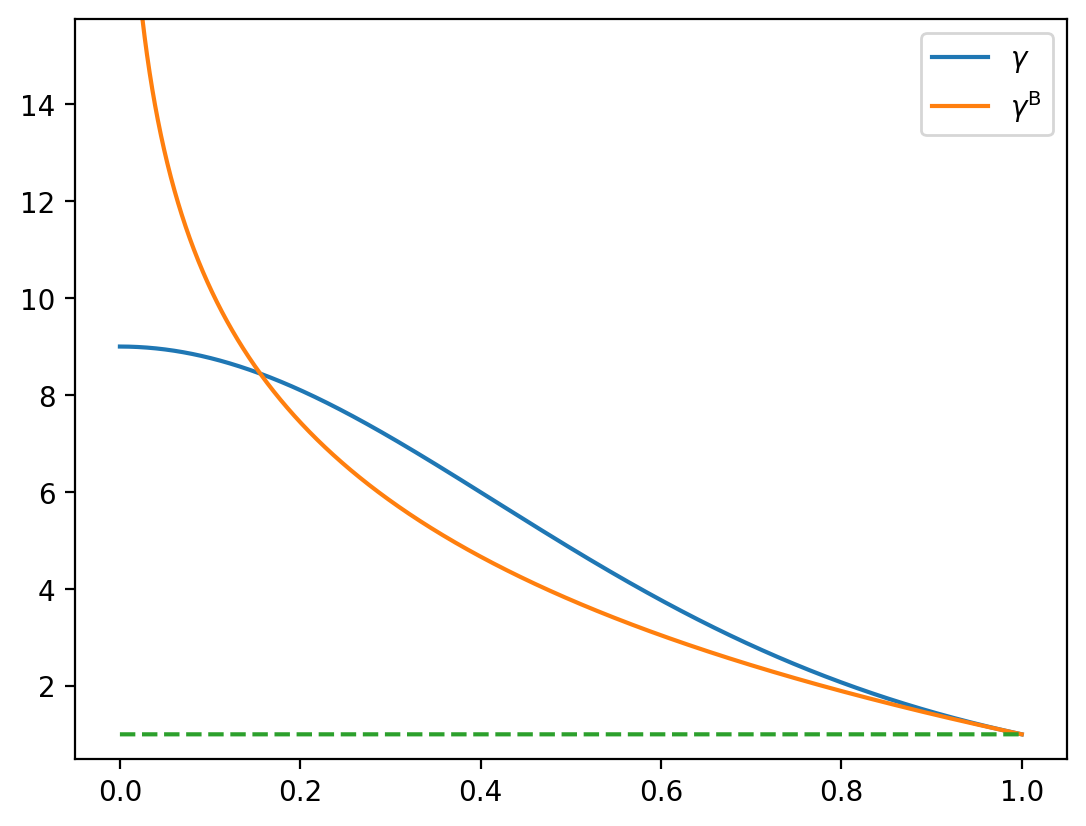}
        \end{subfigure}
        \caption{Plot of the radial profile of $\ga_{d,\mu,\nu}$ (blue) and $\gb_{d,\mu,\nu}$  (orange) defined in \Cref{sec:ex} for $d=3,\;\mu=1,\;\nu=0$. One can observe the presence of a logarithmic singularity in the Born approximation induced by a resonance at zero.} 
        \label{f:ex_2}
\end{figure}

The approach based on the Born approximation has also been successfully applied in the radial case to the Schrödinger operator version of the Calderón problem (the so-called Gelf'and-Calderón problem) at zero energy \cite{Radial_Born}, and at fixed energy \cite{MMS-M25}, and has been formulated in more general settings in \cite{MaMe24}. In particular, analogous results to Theorems 1-4 can be found in \cite{Radial_Born,MMS-M25} for the respective problems. Linearization at other conductivities besides the constant conductivities and existence of the corresponding Born approximation has been addressed in \cite{CMMS25}, as well as some partial results for the non-radial case in two dimensions.
We remark that, using the reconstruction algorithms in \cite{Radial_Born}, one could transform \Cref{c:specrig} into an explicit reconstruction algorithm to determine $\ga$ from $\La_\ga$.

The proofs of the previous theorems use the notion of $A$-amplitude introduced by Simon in \cite{IST1} in the context of inverse spectral theory for half-line Schrödinger operators.
This notion of $A$-amplitude, further developed in \cite{IST2,IST3}, is also a fundamental tool in \cite{Radial_Born,MMS-M25}. 
In particular, to prove \Cref{mt:stability-gamma} we use stability results for the problem of reconstructing a Schrödinger potential from its $A$-amplitude developed in \cite[Section 5]{Radial_Born}. 
The connection between the $A$-amplitude and $\gb$ is not superficial: it has been shown in \cite{MaMe24} that the $A$-amplitude is a Born approximation for the inverse problem of reconstructing a Schrödinger potential from its Weyl-Titchmarsh function. 
The $A$-amplitude also plays an important role in the closely related problem of studying spectra of DtN maps in warped-product manifolds with boundary, see \cite{DKN20,DKN21_stability_steklov,DKN23}.

\subsection*{Structure of the article. }\Cref{mt:conductivity}, \Cref{mt:conductivity_uniqueness}, \Cref{c:specrig} and \Cref{mt:gb_structure} are proved in \Cref{sec:born_exists}, whereas \Cref{mt:stability-gamma} is proved in \Cref{sec:stability}. \Cref{sec:resonances} contains a detailed analysis of the structure of singularities of the Born approximation, both in the Schrödinger operator and conductivity cases, that are used in the proofs of \Cref{mt:conductivity} and \Cref{mt:gb_structure}; these results are of independent interest.

\subsection*{Acknowledgments}
F.M. and C.M. have been supported by grants PID2021-124195NB-C31 and PID2024-158664NB-C21 from Agencia Estatal de Investigación (Spain). F. N. is supported by the GDR Dynqua.

    \begin{figure}[h]
        \centering
        \begin{subfigure}[t]{0.46\textwidth}
            \centering
            \includegraphics[width=0.9\textwidth]{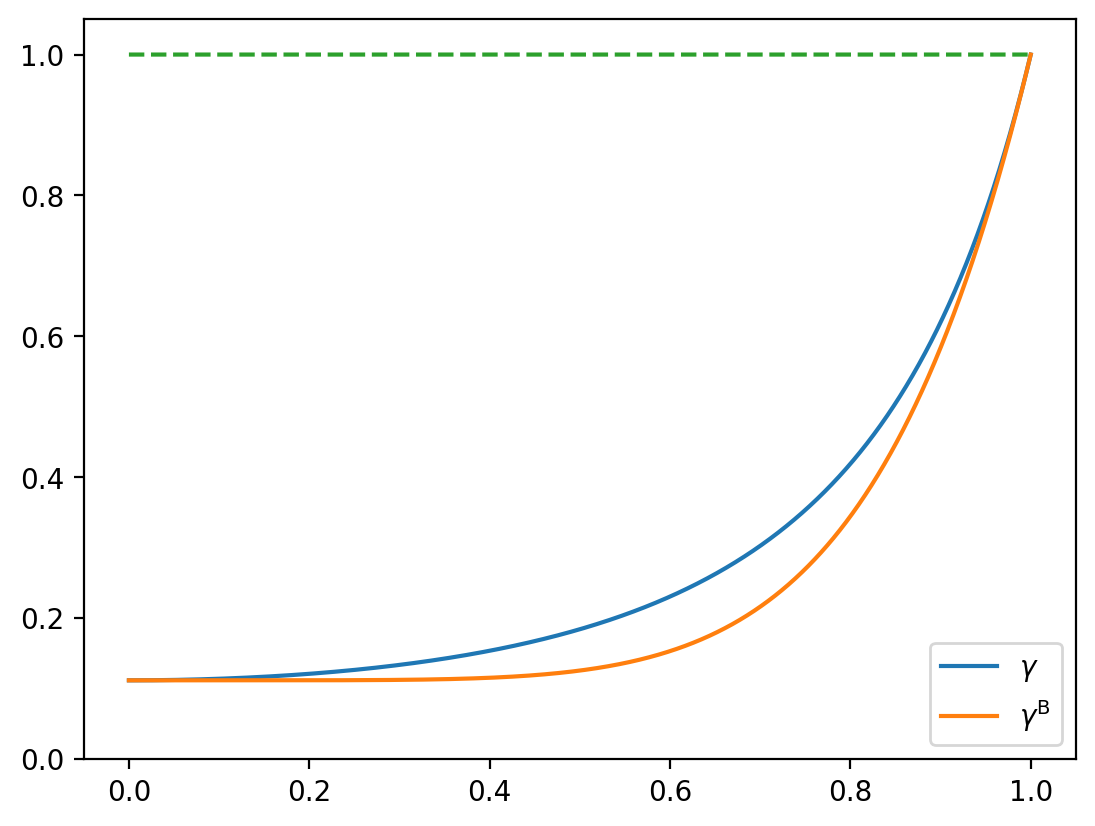}
            \caption{$d=2,\;\mu=1,\;\nu=3$.}  
        \end{subfigure}
        \hfill
        \begin{subfigure}[t]{0.46\textwidth} 
            \centering 
           \includegraphics[width=0.9\textwidth]{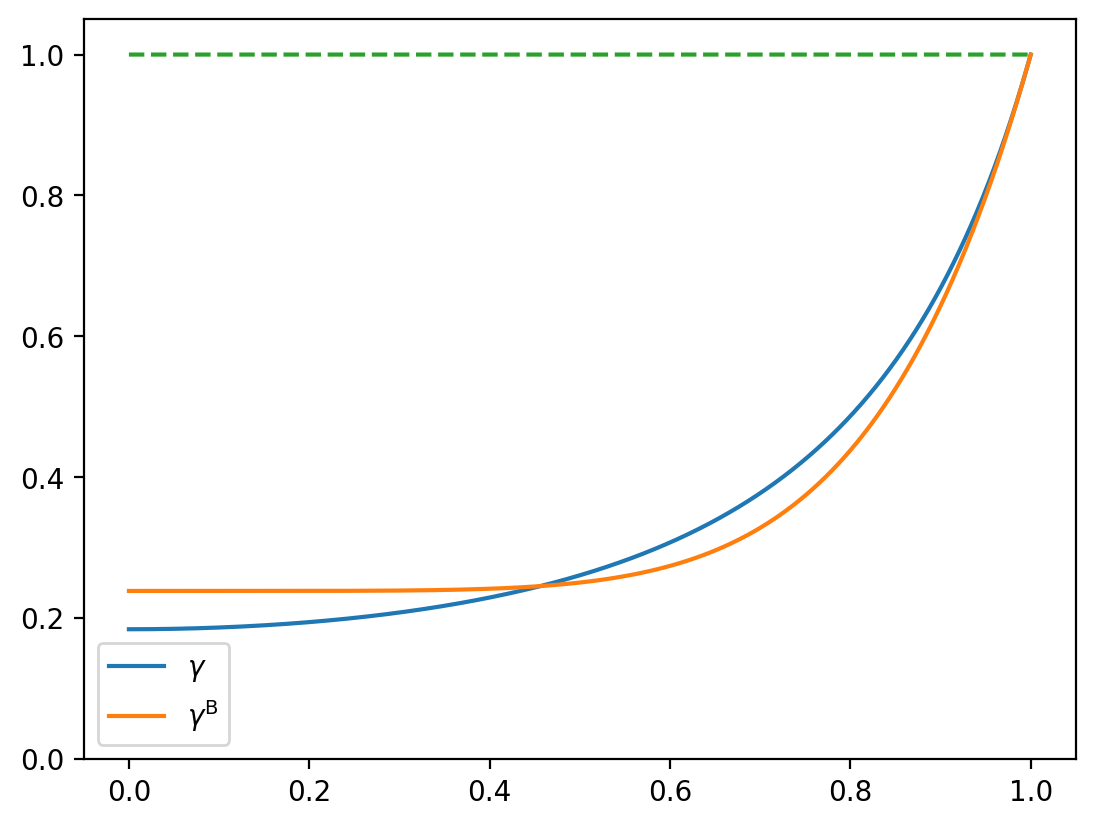}
            \caption{$d=3,\;\mu=1,\;\nu=3$.}  
        \end{subfigure}
        \vskip\baselineskip
        \begin{subfigure}[t]{0.46\textwidth}
            \centering 
            \includegraphics[width=0.9\textwidth]{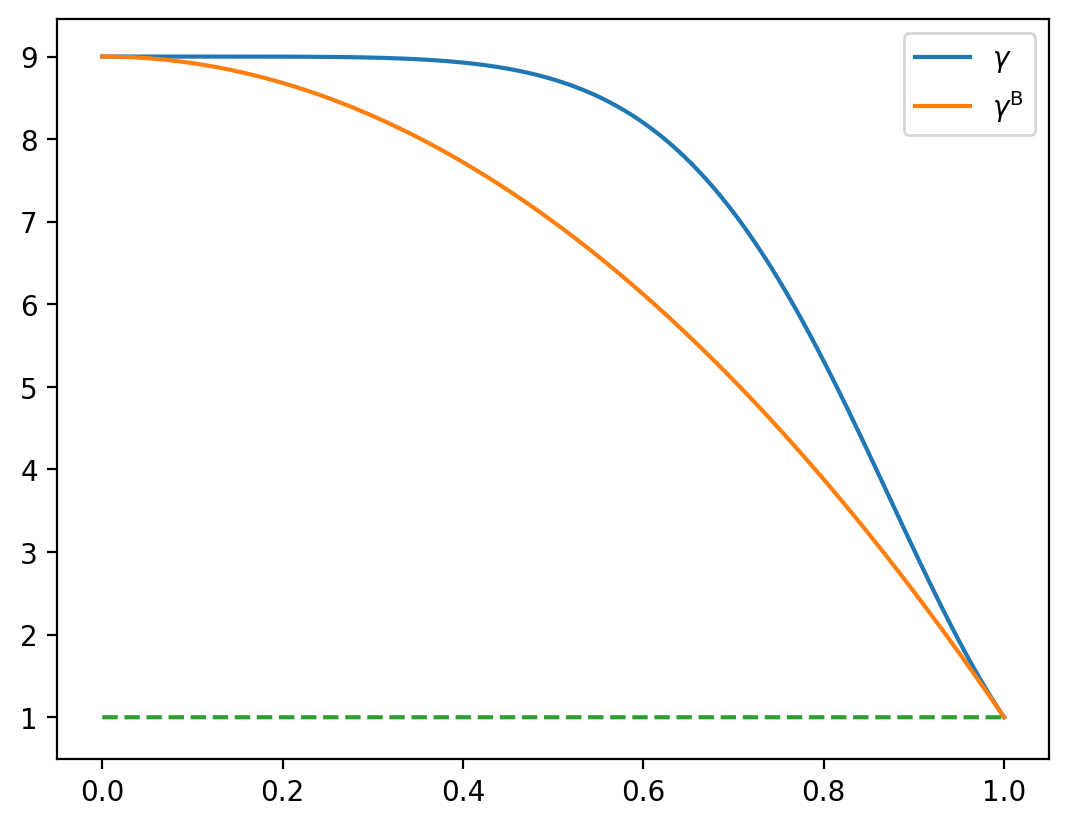}
            \caption{$d=2,\;\mu=3,\;\nu=1$.}  
        \end{subfigure}
        \hfill
        \begin{subfigure}[t]{0.46\textwidth}
            \centering 
            \includegraphics[width=0.9\textwidth]{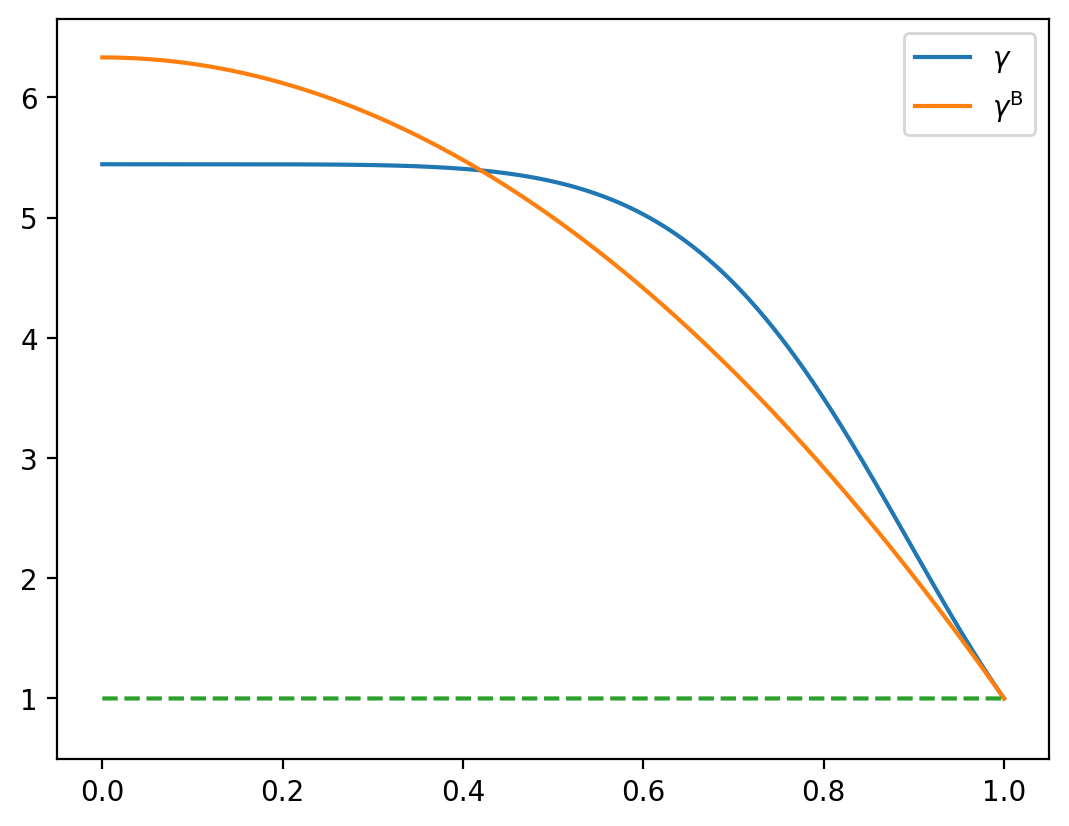}
            \caption{$d=3,\;\mu=3,\;\nu=1$.}  
        \end{subfigure}
        \vskip\baselineskip
        \caption{Plots of the radial profiles of $\ga_{d,\mu,\nu}$ (blue) and $\gb_{d,\mu,\nu}$  (orange) defined in \Cref{sec:ex}.}
        \label{f:ex_1}
\end{figure}

\section{Existence of the Born approximation}\label{sec:born_exists} 

\subsection{The Born approximation for DtN maps of Schrödinger operators} \label{sec:DtNp}\

It is well-known that, provided $\ga$ has enough regularity, the study of $\Lambda_\gamma$ can be reduced to that of a DtN map associated with a certain Schrödinger operator. If $V\in L^p(\IB^d,\IR)$ with $p>d/2$ is such that zero is not a Dirichlet eigenvalue of $-\Delta+V$ on $L^2(\IB^d)$, then for every $f\in H^{1/2}(\IS^{d-1})$,  the problem 
 \begin{equation}   \label{e:bv_prob_S}
\begin{cases}
-\Delta v + V v  &= 0, \qquad    \text{on }   \IB^d,\\
\hfill v|_{\IS^{d-1}}  &= f,
\end{cases}
\end{equation}
admits a unique solution $v\in H^1(\IB^d)$. The DtN map is defined by the identity $\Lp_V f = \partial_\nu v|_{\IS^{d-1}}$, where $v$ is the solution of \eqref{e:bv_prob_S}. If the regularity of $V$ or $f$ is not enough to define $\Lp_V f$, one uses the standard weak definition:
\begin{equation*}
 \br{g, \Lp_{V}f}_{H^{1/2}(\IS^{d-1}) \times H^{-1/2}(\IS^{d-1})} :=  \int_{\IB^d} \left( \nabla u\cdot \nabla v +  V u   v \right)\, dx,
\end{equation*}
where $v$ solves \eqref{e:bv_prob_S}  and $u$ is any $H^{1}(\IB^d)$ function with trace $g$. 

If $\gamma\in W^{2,p}(\IB^d,\IR_+)$ with $p>d/2$ is a conductivity and $u$ solves \eqref{e:cond_problem}, then $v=\sqrt{\gamma}u$ satisfies \eqref{e:bv_prob_S} with boundary datum $\sqrt{\gamma}f$,
where 
\begin{equation} \label{e:V_to_gamma}
V:=\frac{\Delta \sqrt{\gamma}}{\sqrt{\gamma}}\in L^p(\IB^d,\R).
\end{equation}
Moreover, if $V$ arises in this way from a conductivity, then zero is not a Dirichlet eigenvalue of $-\Delta+V$ on $L^2(\IB^d)$, and one can check that the following identity holds
\begin{equation} \label{e:DNmap}  
\Lp_V f =\gamma^{-1/2}  \dL_\gamma (\gamma^{-1/2}f) + \frac{1}{2} \gamma^{-1} (\partial_\nu\gamma) f.
\end{equation}
From this identity, it is clear that the Calderón problem for conductivities is equivalent to the one with Schrödinger operators, provided $\gamma$ and $\partial_\nu \gamma$ are known at the boundary $\IS^{d-1}$. 

As was the case with conductivities, if $V\in L^p_{\rad}(\IB^d,\IR)$, then $\Lp_V$ is a self-adjoint unbounded operator on $L^2(\IS^{d-1})$ that is invariant by rotations (since the equation \eqref{e:bv_prob_S} is). Therefore, it is diagonalizable in the basis of spherical harmonics. We denote:
\begin{equation*}
    \Lp_V|_{\gH_{k,d}}=\lp_k[V] \Id_{\gH_{k,d}},\qquad \forall k\in\N_0.
\end{equation*} 
Let $\1 \in L^\infty(\IB^d)$ be the conductivity that is identically 1 in $\IB^d$. When $\ga =\1$, the associated potential is $V=0$
 and hence $\Lp_0 = \La_\1$ and  $\lp_k[0] = \lambda_k[\1] = k$ for every $k\in\N_0$.

The following result improves the conclusion of \cite[Theorem 1]{Radial_Born} when the potential arises from a conductivity via identity \eqref{e:V_to_gamma}. For shortness, define
\begin{equation*}
    \norm{f}_{d,\alpha}:=\int_{\IB^d} |f(x)| |x|^{2-d}  (1-\log|x|)^\alpha \, dx <\infty, \qquad  \alpha \ge 0,\,  d\ge 2.
\end{equation*}
\begin{proposition}  \label{p:Vb_space}
    Let $d\ge 2$ and let $\gamma \in W^{2,p}_\rad(\IB^d)$  with $d/2<p\le \infty$ be a conductivity. Let $V$ be given by \eqref{e:V_to_gamma} and $\Lp_V $ by \eqref{e:DNmap}. Then $V \in L^{p}_\rad(\IB^d,\R)$ and there exists a unique $\vb \in L^1_\rad(\IB^d,\R)$ such that
    \begin{equation} \label{e:moment_prob_pot}
        \sigma_k[\vb] = \lp_k[V]-   k,     \qquad \text{for all } \, k \in \N_0.
    \end{equation}
    In addition:
    \begin{enumerate}[i)]
        \item If $d>2$, there exists a finite (possibly empty) set of real numbers  $0<\ka_1<\dots<\ka_J <\nu_d$ with $J\in \N_0$ and some constants $a_0\in \R, \, a_1,\dots,a_J\le 0$ such that
        \begin{equation}\label{e:singsvb}
            \vb(x) = \vb_\mathrm{reg}(x) + \frac{a_0}{|x|^2} + \sum_{j=1}^J \frac{a_j }{|x|^{2\kappa_j+2}} ,
        \end{equation}
        where $\vb_\mathrm{reg}$ satisfies $\norm{\vb_\mathrm{reg}}_{d,\alpha}<\infty$ for all $\alpha>1$. 
        \item If $d=2$ then $\norm{\vb}_{2,\alpha}<\infty$ for all $\alpha>1$.
        \item $\vb|_{\IB^d\setminus \{0\}} \in L^p_\loc(\IB^d\setminus \{0\})$. In fact, $\vb - V$ is continuous on $\ol{\IB^d}\setminus\{0\}$ and vanishes on $\partial \IB^d$. Also, $V \in \cC^m(U_s)$ iff $\vb \in \cC^m(U_s)$, for all $m\in\IN_0$ and for all $0<s<1$.
        \item $\vb \in L^1(\IB^d)$.
    \end{enumerate}
 \end{proposition}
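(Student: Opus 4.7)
The plan is to reduce the multi-dimensional DtN problem to a family of one-dimensional Schrödinger problems via spherical harmonic decomposition, and then apply Simon's theory of the $A$-amplitude, following the strategy of \cite{Radial_Born} and refining it to account for the improved regularity $\gamma\in W^{2,p}_\rad$.

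First I would perform the standard separation of variables. Writing a solution $v$ to $(-\Delta+V)v=0$ in polar coordinates as $v(r\omega)=r^{-(d-1)/2}\psi(r)Y_k(\omega)$ with $Y_k\in\gH_k$, the radial factor satisfies the 1-d Schrödinger equation
\[
-\psi''(r)+\left(V(r)+\frac{(k+\nu_d)^2-1/4}{r^2}\right)\psi(r)=0,\qquad r\in(0,1],
\]
and $\lp_k[V]$ is expressed through the Weyl--Titchmarsh function of this operator evaluated at $r=1$ with angular parameter $k+\nu_d$. The $W^{2,p}$ hypothesis on $\gamma$ gives $V\in L^p_\rad$, which is the correct regularity class for the 1-d theory.

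Next I would invoke the $A$-amplitude representation. For half-line Schrödinger operators with a centrifugal barrier, Simon's representation (as developed and adapted to the ball setting in \cite{IST1, Radial_Born, MaMe24}) expresses the Weyl function minus the Bessel reference as a Laplace-type transform of an integrable kernel $A$. Evaluating at the discrete values of the angular parameter produces, after the change of variable that turns the Laplace moments into Hausdorff moments, the identity $\sigma_k[\vb]=\lp_k[V]-k$ for a radial function $\vb\in L^1_\rad(\IB^d)$. Uniqueness of $\vb$ is then immediate: the moments $\sigma_k$ of a radial $L^1(\IB^d)$ function are the Hausdorff moments of an $L^1([0,1])$ function, and these determine it uniquely.

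Finally I would establish properties i)--iii). Local $L^p$ regularity away from the origin (and the continuity of $\vb-V$ up to the boundary with vanishing trace on $\p\IB^d$) follows from the explicit reconstruction formula that recovers $V$ from $\vb$ plus a smoother correction, combined with the fact that $V\in L^p_\rad$ transfers along this reconstruction; the $\cC^m$ equivalence in $U_s$ comes from the standard local smoothing property of the $A$-amplitude correspondence. The singularity expansion \eqref{e:singsvb} is the delicate point and is where I would expect the main obstacle to lie: the negative powers $|x|^{-2\kappa_j-2}$ with $0<\kappa_j<\nu_d$ correspond precisely to the eigenvalues and threshold resonances of the auxiliary half-line operator that fall in the long-range regime, and the sign condition $a_j\le 0$ reflects the sign structure of the corresponding spectral data. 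Proving finiteness of this set, the precise range of exponents, and the sign condition requires a careful spectral analysis of the singular 1-d operator with inverse-square potential; in the paper this is carried out in \Cref{sec:resonances}, which is precisely why \Cref{p:Vb_space} is deferred there. For $d=2$ one has $\nu_d=0$ so the range $(0,\nu_d)$ is empty and the singular terms disappear, leaving only the logarithmic weight in $\norm{\cdot}_{2,\alpha}$.
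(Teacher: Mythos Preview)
Your broad outline---reduce to a one-dimensional Schr\"odinger problem and invoke Simon's $A$-amplitude---is correct and is what the paper does. However, two concrete ingredients are missing from your proposal, and without them the argument does not close.

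First, the one-dimensional reduction you describe keeps the radial variable $r$ and produces a \emph{family} of operators with singular centrifugal terms $((k+\nu_d)^2-1/4)/r^2$. The paper instead performs the logarithmic substitution $t=-\log r$, under which the centrifugal barrier becomes the constant $(k+\nu_d)^2$ and is absorbed into the spectral parameter: one obtains a \emph{single} regular operator $H_{Q_V}=-\partial_t^2+Q_V$ on $\IR_+$ with $Q_V(t)=e^{-2t}q_V(e^{-t})\in L^1_\ell(\IR_+)$ for all $\ell$, evaluated at the discrete energies $-(k+\nu_d)^2$. This is not cosmetic: the Ramm--Simon asymptotics for the $A$-amplitude (which yield the expansion \eqref{e:singsvb}) are stated for potentials in $L^1_\ell(\IR_+)$, and the singularities $|x|^{-2\kappa_j-2}$ are images of the exponentials $e^{2\kappa_j t}$ under this change of variable. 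Your phrase ``singular 1-d operator with inverse-square potential'' suggests you are working in the wrong coordinates, where the $A$-amplitude theory is not directly available.

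Second, and more seriously, you do not identify the mechanism that forces $0<\kappa_j<\nu_d$ and, when $d=2$, the absence of both eigenvalues and zero resonance. This is precisely where the conductivity hypothesis enters: writing $\phi(t)=e^{-\nu_d t}\sqrt{\gamma_0(e^{-t})}$ one checks $Q_V=\phi''/\phi-\nu_d^2$, whence $H_{Q_V}+\nu_d^2=B^*B$ with $B=-\partial_t+(\log\phi)'$. This factorization gives $H_{Q_V}\ge -\nu_d^2$ with strict inequality on $H^1_0$ (since $\phi(0)\neq 0$), pinning the discrete spectrum inside $(-\nu_d^2,0)$. For $d=2$ one has $\nu_d=0$, so the spectrum is empty, and the absence of a zero resonance is shown by observing that $\phi$ itself is the Jost solution at $z=0$ and $\phi(0)=\sqrt{\gamma_0(1)}>0$. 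Your proposal acknowledges that ``the precise range of exponents'' is the delicate point but offers no argument; without the factorization this bound (and hence the integrability of $\vb$) does not follow.
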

Following  \cite{Radial_Born}, we say that $\vb$ is the Born approximation of the potential $V$.
\Cref{p:Vb_space}  is proved in \Cref{sec:specconduc}. It relies on a fine analysis of the singularities of the Born approximation and constitutes an important ingredient of the proof of \Cref{mt:conductivity} presented below.

\subsection{The Born approximation \texorpdfstring{for $\Lambda_\ga$}{}} \

The proof of \Cref{mt:conductivity} follows from \Cref{p:Vb_space} and the following technical properties of radial functions.
\begin{lemma}  \label{l:w_rad}
    Let $d\ge 2$.  The following assertions hold:
    \begin{enumerate}[i)]
        \item If $u \in W^{2,1}_\rad(\IB^d)$ then $u \in \cC^1(\ol{\IB^d} \setminus \{0\})$. Moreover, if $u= u_0(|\cdot|)$,  the following quantities are well-defined
        \begin{equation} \label{e:ab_notation}
        a(u) := u_0(1) , \qquad b(u) := \frac{1}{2}\partial_r u_0(1),
        \end{equation}
        and satisfy $|a(u)| \le C\norm{u}_{W^{2,1}(\IB^d)}$, $|b(u)| \le C\norm{u}_{W^{2,1}(\IB^d)}$ for some $C>0$.
        \item \label{item:last} For all $u \in W^{2,1}_\rad(\IB^d)$
        \begin{align}  
        \sigma_0[\Delta u] & =2b(u),\label{e:moments_0}\\
        \sigma_{k}[\Delta u] &=   2b(u) -2k a(u) + 2k(2k+d-2) \sigma_{k-1}[u], \qquad \forall k\in \N.\label{e:moments_derivative}
        \end{align}
    \item For all $u \in L^1_\rad(\IB^d)$
    \begin{equation}  \label{e:H_fourier_0}
    \widehat{u}(\xi) 
    =\pi^{d/2}  \sum_{k=1}^\infty  \frac{ (-1)^{k-1}}{k! \Gamma(k+d/2)}
    \left(\frac{|\xi|}{2}\right)^{2k-2}   k(2k+d-2) \sigma_{k-1}[u].
    \end{equation}
    \end{enumerate}
\end{lemma}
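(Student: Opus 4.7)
The plan is to treat the three parts separately, exploiting throughout the identity $\Delta u = r^{1-d}(r^{d-1} u_0')'$ for $u(x)=u_0(|x|)$.

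For (i), I would set $f(r):=r^{d-1} u_0'(r)$. A routine computation of the $L^1$ norms of the mixed partials $\p_i\p_j u$ and of $\Delta u$ for a radial $u$ shows that the hypothesis $u\in W^{2,1}_\rad(\IB^d)$ is equivalent to the weighted bounds $\int_0^1 |u_0''(r)| r^{d-1}\,dr<\infty$ and $\int_0^1 |u_0'(r)| r^{d-2}\,dr<\infty$. In particular $f'(r) = r^{d-1} u_0''(r) + (d-1) r^{d-2} u_0'(r)$ lies in $L^1([0,1])$, so $f$ is absolutely continuous on $[0,1]$, $u_0'\in \cC((0,1])$, and integrating gives $u_0\in \cC((0,1])$, whence $u\in\cC^1(\ol{\IB^d}\setminus\{0\})$. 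The inequalities for $a(u)$ and $b(u)$ then follow from the classical trace inequality $W^{2,1}(\IB^d)\hookrightarrow W^{1,1}(\IS^{d-1})$ applied to $u$ and to $\p_\nu u$, together with the fact that traces of radial functions are constant on the sphere, so their $L^1(\IS^{d-1})$ norm is a multiple of $|u_0(1)|$ and $|u_0'(1)|$ respectively.

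For (ii), I would first establish $f(0)=\lim_{r\to 0^+} r^{d-1} u_0'(r)=0$: the limit exists by absolute continuity, and any nonzero value $L_0$ would force $u_0'(r)\sim L_0 r^{1-d}$ near the origin, contradicting the bound $\int_0^1 |u_0'| r^{d-2}\,dr<\infty$ from step (i). Writing
\[
\sigma_k[\Delta u] = \int_0^1 r^{2k} f'(r)\,dr
\]
and integrating by parts once yields \eqref{e:moments_0} in the case $k=0$. For $k\geq 1$, the same integration by parts produces $\sigma_k[\Delta u] = 2b(u)-2k\int_0^1 r^{2k+d-2} u_0'(r)\,dr$, and a second integration by parts rearranges this into \eqref{e:moments_derivative} provided $\lim_{r\to 0^+} r^{2k+d-2} u_0(r) = 0$. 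I would verify this last point by deriving, from the representation $u_0'(r)=r^{1-d}\int_0^r f'(s)\,ds$ and a simple further integration, the growth estimates $|u_0(r)| \le C r^{2-d}$ in dimensions $d\ge 3$ and $|u_0(r)| \le C(1+|\log r|)$ in dimension $d=2$; in both cases the prefactor $r^{2k+d-2}$ absorbs the singularity whenever $k\ge 1$.

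For (iii), I would start from the classical Fourier--Bessel representation
\[
\widehat{u}(\xi) = (2\pi)^{d/2} |\xi|^{1-d/2}\int_0^1 u_0(r)\, r^{d/2} J_{d/2-1}(r|\xi|)\,dr,
\]
valid for any $u\in L^1_\rad(\IB^d)$. Substituting the power series $J_{d/2-1}(z) = \sum_{k\ge 0}(-1)^k (z/2)^{2k+d/2-1}/(k!\,\Gamma(k+d/2))$, exchanging sum and integral (justified since for fixed $\xi$ the series converges uniformly in $r\in[0,1]$), and recognizing $\sigma_k[u]=\int_0^1 u_0(r) r^{2k+d-1}\,dr$ gives
\[
\widehat{u}(\xi) = 2\pi^{d/2}\sum_{k=0}^\infty \frac{(-1)^k}{k!\,\Gamma(k+d/2)}\left(\frac{|\xi|}{2}\right)^{2k}\sigma_k[u].
\]
A reindexing $k\mapsto k-1$, together with the elementary identities $\Gamma(k+d/2)=(k-1+d/2)\Gamma(k-1+d/2)$ and $2(k-1+d/2)=2k+d-2$, turns this into exactly \eqref{e:H_fourier_0}. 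Absolute convergence is immediate from the uniform bound $|\sigma_{k-1}[u]|\le \norm{u}_{L^1(\IB^d)}/|\IS^{d-1}|$ and the entire nature of the Bessel series.

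The main obstacle I anticipate is the careful bookkeeping of boundary behaviour at the origin in parts (i) and (ii): proving $f(0)=0$ and obtaining growth estimates on $u_0$ near $r=0$ sharp enough to justify discarding the two boundary terms in the successive integrations by parts. Once this is in place, the rest of the proof reduces to algebraic manipulation and the standard Fourier--Bessel expansion.
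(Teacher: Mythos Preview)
Your proposal is correct and follows essentially the same route as the paper: the same two integrations by parts for (ii), and for (iii) the paper likewise reduces to the series identity $\widehat{u}(\xi)=2\pi^{d/2}\sum_{k\ge 0}\frac{(-1)^k}{k!\,\Gamma(k+d/2)}(|\xi|/2)^{2k}\sigma_k[u]$ and then reindexes, the only difference being that the paper quotes this identity from an earlier article while you rederive it from the Fourier--Bessel expansion. If anything you are more careful than the paper in (ii): the paper writes down the outcome of the two integrations by parts without justifying the vanishing of the boundary contributions at $r=0$, whereas your argument that $f(0)=0$ and that $r^{2k+d-2}u_0(r)\to 0$ supplies exactly those missing checks.
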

\begin{proof}
    Statement i) is immediate using that $\partial_r^2 u_0(r)$ belongs to $L^1(\varepsilon,1)$ for all $\varepsilon>0$. The boundedness of the trace  functionals $a$ and $b$ follows immediately by the standard trace theorem with exponent $p=1$.   We now prove the remaining items. By integration by parts:
    \begin{align*}
     \nonumber \sigma_k[\Delta u] =  \int_0^1 &\partial_r (r^{d-1} \partial_r u_0(r)) r^{2k} \, dr =\partial_r u_0(1)-2k\int_0^1   \partial_r u_0(r) r^{2k+d-2} \, dr \\
      &= \partial_r u_0(1) -2k u_0(1) + 2k(2k+d-2)\int_0^1  u_0(r) r^{2k +d-3} \, dr \\
       &= \partial_r u_0(1) -2k u_0(1) + 2k(2k+d-2) \sigma_{k-1}[u].
    \end{align*}
    Thus, for any $u\in W^{2,1}_\rad(\IB^d)$ we have 
    \begin{equation*}  
        \sigma_{k}[\Delta u] =   \partial_r u_0(1) -2k u_0(1) + 2k(2k+d-2) \sigma_{k-1}[u],
    \end{equation*}
    which proves \eqref{e:moments_derivative}. The previous derivation also holds with $k=0$, in which case one gets $ \sigma_{0}[\Delta u]  = \partial_r u_0(1)$. This concludes the proof of ii). Formula \eqref{e:H_fourier_0} follows from \cite[Equation 1.20]{BCMM22}, which states that
    \begin{equation}\label{e:ft_moments}
    \widehat{u}(\xi) =   2 \pi^{d/2}  \sum_{k=0}^\infty  \frac{ (-1)^k}{k! \Gamma(k+d/2)}
    \left(\frac{|\xi|}{2}\right)^{2k}   \sigma_k[u],
    \end{equation}
    and straightforward algebraic properties of the gamma function.
\end{proof}

Define the following operator acting on $L^1_\rad(\IB^d)$. 
\begin{equation} \label{e:Td_def}
    T_d f(x) := \int_{|x|}^1 \frac{1}{t^{d-1}} \int_0^t   f_0(s) s^{d-1}\, ds dt,\qquad f\in L^1_\rad(\IB^d),\; f = f_0(|\cdot|).
\end{equation}
One readily checks that $T_d f\in \cC^1(\ol{\IB^d}\setminus\{0\})$ is radial, $T_d f|_{\IS^{d-1}} =0$, and that $-\Delta T_d f= f$ a.e. in $\IB^d$.
\begin{lemma} \label{l:regularity_poisson}

    Let  $f\in L^1_\rad(\IB^d)$ and $u:=T_d(f)$.
    
    \begin{enumerate}[i)]
        \item If $f\in L^p_\rad(\IB^d)$ for some $1<p< \infty$, then $u \in W^{2,p}_\rad(\IB^d)$.
        \item   If $f$ satisfies  for some $\alpha > 1$  that 
        \begin{equation*}
       \norm{f}_{d,\alpha} = \int_{\IB^d} |f(x)| |x|^{2-d}  (1-\log|x|)^\alpha \, dx <\infty, 
        \end{equation*} 
        then $u \in \cC(\ol{\IB^d})$, $\nabla u \in L^d(\IB^d)$, and $D^2 u \in L^1(\IB^d)$. 
    \end{enumerate}
    
\end{lemma}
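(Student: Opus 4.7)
The starting point is the observation, already made in the excerpt, that $u:=T_df$ is the unique radial solution of $-\Delta u = f$ in $\IB^d$ with $u|_{\p\IB^d}=0$. Writing $u=u_0(|\cdot|)$ and introducing the primitive $F(r):=\int_0^r f_0(s)s^{d-1}\,ds$, one differentiation gives the explicit formulas $u_0'(r)=-F(r)/r^{d-1}$ and $u_0''(r)=(d-1)F(r)/r^d-f_0(r)$, which I would use throughout.

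For i) I would simply invoke the standard $L^p$ Calder\'on--Zygmund theory ($1<p<\infty$) for the Dirichlet Laplacian on the ball: the solution operator $(-\Delta)^{-1}\colon L^p(\IB^d)\to W^{2,p}(\IB^d)\cap W^{1,p}_0(\IB^d)$ is bounded, and by uniqueness it coincides with $T_d$ on the radial subspace, so $T_df\in W^{2,p}_\rad(\IB^d)$.

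Part ii) requires direct estimation via the radial formulas above. Continuity of $u$ on $\ol{\IB^d}\setminus\{0\}$ is immediate; existence of the limit $u_0(0)$ follows from Fubini, giving $u_0(0)=-\int_0^1 f_0(s)s\log s\,ds$ when $d=2$ and $u_0(0)=(d-2)^{-1}\int_0^1 f_0(s)(s-s^{d-1})\,ds$ when $d>2$. Both integrals are dominated by $\|f\|_{d,\alpha}$ via the elementary bound $|\log s|\le (1-\log s)^\alpha$ on $(0,1]$. The step I expect to be hardest is $\nabla u\in L^d$: after polar coordinates,
\[
\|\nabla u\|_{L^d(\IB^d)}^d = |\IS^{d-1}|\int_0^1 \frac{|F(r)|^d}{r^{(d-1)^2}}\,dr,
\]
so the plan is to pull out the factor $s^{d-2}/(1-\log s)^\alpha$ from the integral defining $F$ to obtain
\[
|F(r)|\le \left(\sup_{0<s\le r}\frac{s^{d-2}}{(1-\log s)^\alpha}\right)\int_0^r|f_0(s)|\,s(1-\log s)^\alpha\,ds \le C\,\frac{r^{d-2}}{(1-\log r)^\alpha}\,\|f\|_{d,\alpha},
\]
the supremum being attained at $s=r$ since the integrand is nondecreasing on $(0,1]$ for every $d\ge 2$. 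Plugging this in, the algebraic exponents collapse via the identity $(d-1)^2-d(d-2)=1$, reducing the radial integral to $\int_0^1 r^{-1}(1-\log r)^{-d\alpha}\,dr$, which is finite thanks to $d\alpha>1$; the hypothesis $\alpha>1$ is sharp here. Finally, $D^2u\in L^1$ follows from $|D^2u(x)|\lesssim|u_0''(|x|)|+|u_0'(|x|)|/|x|$: both terms reduce, after polar coordinates and Fubini, to $\int_0^1|f_0(s)|s^{d-1}|\log s|\,ds$ (plus an $\|f\|_{L^1(\IB^d)}$ contribution from the $-f_0$ piece of $u_0''$), again controlled by $\|f\|_{d,\alpha}$.
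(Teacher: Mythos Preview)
Your proof is correct and follows essentially the same route as the paper: for i) you both invoke Calder\'on--Zygmund, and for ii) you both derive the key pointwise bound $|u_0'(r)|\le C\,r^{-1}(1-\log r)^{-\alpha}\|f\|_{d,\alpha}$ (the paper pulls out $s^{d-2}$ and $(1-\log s)^{-\alpha}$ in two steps, you do it in one) and then feed it into the $L^d$ integral for the gradient, arriving at the same $\int_0^1 r^{-1}(1-\log r)^{-d\alpha}\,dr$. The only substantive difference is that for continuity and $D^2u\in L^1$ you use Fubini on the double integral, whereas the paper integrates the pointwise bound on $|u_0'|$ directly; both are routine.

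One small inaccuracy: your remark that ``the hypothesis $\alpha>1$ is sharp here'' in the $\nabla u\in L^d$ step is wrong --- that step only needs $d\alpha>1$, i.e.\ $\alpha>1/d$ (the paper notes this explicitly). The place where $\alpha>1$ is genuinely required is the $d=2$ case of continuity and of $D^2u\in L^1$, where your Fubini argument produces $\int_0^1|f_0(s)|\,s\,|\log s|\,ds$, controlled by $\|f\|_{2,\alpha}$ only when $\alpha\ge 1$.
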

\begin{proof}
    i) Since $-\Delta u = f$, $ f\in L^p(\IB^d)$ and $u$ vanishes on $\IS^{d-1}$, $u\in W^{2,p}(\IB^d)$ by the standard Calderón-Zygmund estimates for the Laplacian.
    
    ii) Denote by $F$ and $f_0$, respectively, the radial profiles of $u = T_d(f)$ and $f$. By \eqref{e:Td_def} we have
    \begin{equation*}
        |F'(r)| \le \frac{1}{r^{d-1}}\int_0^r   |f_0(s)| s^{d-1} \, ds   \le \frac{1}{r}\int_0^r   |f_0(s)| s \, ds .
    \end{equation*}
    Then, for all $\alpha >1$, using that $(1-\log r) \le (1-\log s) $ for $s<r$, we have the estimate
    \begin{equation*}
        |F'(r)|  \le \frac{1}{r(1-\log r)^\alpha} \int_0^r   |f_0(s)| s (1-\log s)^\alpha \, ds \le     \frac{c_d\norm{f}_{d,\alpha} }{r(1-\log r)^\alpha},
    \end{equation*}
    where $c_d:= |\IS^{d-1}|^{-1}$. This implies that $F(0)$ is well defined and finite, since, under our hypothesis on $\alpha$,
    \begin{equation*}
        |F(0)|  \le  \int_{0}^1 |F'(r)| \, dr \le \int_{0}^1 \frac{c_d \norm{f}_{d,\alpha} }{r(1-\log r)^\alpha} \, dr =\int_0^{\infty} \frac{c_d \norm{f}_{d,\alpha} }{(1+t)^\alpha}dt <\infty.
    \end{equation*}
    The integrability of $F'$ in $[0,1]$ implies that $F$ is continuous on $[0,1]$, and hence, $u \in \cC(\ol{\IB^d})$.
    Since $\alpha>1$ then $\alpha d>1$ and one finds that the gradient satisfies
    \begin{equation*}
        \norm{\nabla u}_{L^d(\IB^d)}^d \le |\IS^{d-1}|\int_0^1 |F'(r)|^d r^{d-1} \, dr\le  \int_0^1 \frac{c_d^{d-1}\norm{f}_{d,\alpha}^d }{r(1-\log r)^{ d \alpha }} \, dr <\infty.
    \end{equation*}
    One can similarly bound the Hessian, using $ F''(r) = -(d-1)r^{-1}F'(r) -f_0(r)$,
    \begin{equation*}
        |\p_{x_i x_j}^2 u(x)| \le |F''(|x|)| + 2|F'(|x|) ||x|^{-1} \le (d+1)|F'(|x|) ||x|^{-1} + |f(x)|.
    \end{equation*}
    and,
    \begin{align*}
        \norm{D^2 u}_{L^1(\IB^d)}
        &\le  (d+1)\int_{\IB^d} |F'(|x|) | |x|^{-1}\, dx  + \norm{f}_{L^1(\IB^d)} \\
        &\le (d+1) \norm{f}_{d,\alpha}\int_{0}^1  \frac{r^{d-3}}{(1-\log r)^{  \alpha }} \, dr  + \norm{f}_{L^1(\IB^d)}.
    \end{align*}
    The first integral converges since $\alpha>1$, and, by hypothesis, $\norm{f}_{L^1(\IB^d)}<\infty$; therefore $\norm{D^2 u}_{L^1(\IB^d)}<\infty$ as we wanted to prove.
\end{proof}

\begin{proof}[Proof of \Cref{mt:conductivity}]

Let $V$ be given by \eqref{e:V_to_gamma} and restrict the operator identity \eqref{e:DNmap} to functions in $\gH_{k,d}$ to obtain
\begin{equation} \label{e:ga_eigenvalue}
 \lp_k[V] -k  = \frac{1}{a(\gamma)} \left(   \lambda_k[\gamma] +  b(\gamma) -a(\gamma) k  \right)  , \qquad \forall \, k\in \N_0 ,
\end{equation}
where the  notation in \eqref{e:ab_notation} has been used.
On the other hand, since $\ga \in W^{2,p}_\rad(\IB^d)$, we have $V\in L^{p}_\rad(\IB^d,\IR)$. Therefore, \Cref{p:Vb_space} applies and there exists $\vb \in L^1_\rad(\IB^d,\IR)$ such that
\begin{equation*} 
\lp_k[V] -k  = \sigma_k[\vb] , \qquad \forall \, k\in \N_0 .
\end{equation*}
Inserting this in \eqref{e:ga_eigenvalue} yields
\begin{equation}   \label{e:gamma_eigenvalue}
  \lambda_k[\gamma]  =  a(\gamma) k  - b(\gamma) + a(\gamma) \sigma_k[\vb] , \qquad \forall \, k\in \N_0    .
\end{equation}
We now choose $\gb$ so that $\Delta \gb =  2a(\gamma) \vb$. In order to do so, define
\begin{equation}\label{e:gb_def_thm1}
    \gb(x) :=  a(\ga)-2a(\ga)T_d(\vb),
\end{equation}
where $T_d$ was defined in \eqref{e:Td_def}. Then $\gb \in \cC^1(\ol{\IB^d}\setminus\{0\})$. 

By \Cref{p:Vb_space}, $\vb = \vb_\mathrm{sing} + \vb_\mathrm{reg}$ where
$\norm{\vb_\mathrm{reg}}_{d,\alpha}<\infty$ for all $\alpha>1$, and either $\vb_\mathrm{sing}=0$, if $d=2$, or $\vb_\mathrm{sing} \in L^q_\rad(\IB^d)$ for every $1\leq q<d/s_V$, with $s_V=2\max_{j=0,...,J}\{1,\ka_j+1\}< d$ when $d\geq3 $. Therefore, by \Cref{l:regularity_poisson}, one gets that $\gb \in W^{2,1}_\rad(\IB^d)$.
Also, $\gb$ solves the Poisson problem
\begin{equation}  \label{e:from_comp}
\Delta \gb =  2a(\gamma) \vb , \qquad a(\gb) = a(\ga).
\end{equation}
This proves \eqref{e:specdtn} by direct substitution in \eqref{e:gamma_eigenvalue}.  

To prove \eqref{e:traces}, it remains to show that $b(\ga) = b(\gb)$. Evaluating  \eqref{e:gamma_eigenvalue} in $k=0$ and using that $\lambda_0[\gamma]=0$, we get that $  b(\ga) = a(\ga) \sigma_0[\vb]$. Using that $\Delta \gb =  2a(\gamma) \vb$ and \eqref{e:moments_0}, we obtain
\begin{equation*}
     b(\ga) = a(\ga) \sigma_0[\vb] = \frac{1}{2}\sigma_0[\Delta \gb]=b(\gb).
\end{equation*}
Hence the identities \eqref{e:specdtn} and \eqref{e:traces} hold. This proves that the Born approximation exists.

Applying \eqref{e:moments_derivative} to $\gb$ and inserting the identity \eqref{e:traces} gives
\begin{equation*}
\frac{1}{2}\sigma_k[\Delta \gb] = b(\gamma) -k a(\gamma) + k(2k+d-2) \sigma_{k-1}[\gb], \quad \forall k\in \N.
\end{equation*}
Combining this identity with \eqref{e:specdtn} we deduce that
\begin{equation*}  
   \lambda_k [\gamma] =   k\left(2k +d-2 \right) \sigma_{k-1}[ \gb],  \qquad\forall k\in \N,
\end{equation*}
or, equivalently, that  
$\gb$ is a solution of the moment problem \eqref{e:mompb}. 
This problem has  uniqueness of solutions in $L^1_\rad(\IB^d)$, as the reconstruction formula  \eqref{e:H_fourier_0} implies (in fact, from  \cite[Lemma 6.2]{Radial_Born} it follows that uniqueness holds even for compactly supported, radial distributions). 
Identity \eqref{e:gae_formula} follows directly from \eqref{e:H_fourier_0}.
\end{proof}

\subsection{Proof of the local uniqueness results}

\begin{proof}[Proof of \Cref{mt:conductivity_uniqueness}]
    Assume first that $\gb_1 = \gb_2$ on  $U_s$. Then, in particular \eqref{e:traces} implies
    \begin{equation} \label{e:traces_unique}
             \gamma_1|_{\IS^{d-1}} = \gamma_2|_{\IS^{d-1}},  \qquad \qquad    \partial_\nu \gamma_1|_{\IS^{d-1}} = \partial_\nu \ga_2|_{\IS^{d-1}} .
    \end{equation}
    Also, we get $\vb_1 = \vb_2$ on  $U_s$ by \eqref{e:from_comp}, which implies $V_1 = V_2$ on  $U_s$ by \cite[Theorem 2]{Radial_Born}.
    Then, if $u:= \sqrt{\ga_1} -\sqrt{\ga_2}$, in $U_s$ we have by \eqref{e:V_to_gamma} and \eqref{e:traces_unique} that
    \begin{equation*}  
    \begin{cases}
        -\Delta u + V u = \,0  \quad \text{ on }   U_s,\\
        u|_{\IS^{d-1}}  =  \, 0, \quad \partial_\nu u|_{\IS^{d-1}} =0,   \\ 
    \end{cases}
    \end{equation*}
    where $V := V_1 = V_2$ on $U_s$. Since $V \in L^p (U_s)$ with $ p > \frac{d}{2}$, by unique continuation, we deduce that $u=0$ on $U_s$ and therefore $\gamma_1 = \gamma_2$ on $U_s$.

    If we now assume $\gamma_1 = \gamma_2$ on $U_s$, then \eqref{e:V_to_gamma} implies $V_1 = V_2$ on  $U_s$, and hence one gets  $\vb_1 = \vb_2$ on  $U_s$ by \cite[Theorem 2]{Radial_Born}. Finally, \eqref{e:from_comp} and \eqref{e:traces_unique} (which again holds by \eqref{e:traces}) imply that $\gb_1 = \gb_2$ on  $U_s$ by unique continuation.
\end{proof}

Before proving \Cref{c:specrig}, we need to prove a local uniqueness result for the Hausdorff Moment Problem.
\begin{lemma}\label{l:mom}
For $F\in L^1_\rad(\IB^d)$, $s\in(0,1)$, and $k_0\in\IN$, the following holds:
\begin{equation*}
    F|_{U_s}=0 \;\iff\; \exists C>0,\quad |\sigma_k[F]|\leq Cs^{2k},\quad k\geq k_0.
\end{equation*}
\end{lemma}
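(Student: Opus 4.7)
The plan is to reduce the equivalence to a one-dimensional moment problem on $[0,1]$ and then exploit analytic properties of the associated Cauchy (Stieltjes) transform. Writing $F=F_0(|\cdot|)$ and passing to polar coordinates gives $\sigma_k[F]=\int_0^1 F_0(r)\,r^{2k+d-1}\,dr$. The forward implication $F|_{U_s}=0\Rightarrow |\sigma_k[F]|\leq Cs^{2k}$ is then immediate: on the support $\{r\leq s\}$ one bounds $r^{2k}\leq s^{2k}$ and factors out the $L^1$-norm of $F$.

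For the converse, the substitution $u=r^2$ rewrites the sequence of interest as the moments of a finite complex Borel measure on $[0,1]$,
\begin{equation*}
    d\mu(u):=\tfrac{1}{2}F_0(\sqrt{u})\,u^{(d-2)/2}\,du,
\end{equation*}
so that $\sigma_k[F]=\int_0^1 u^k\,d\mu(u)$ with $\norm{\mu}_{\mathrm{TV}}<\infty$. It therefore suffices to prove the following one-dimensional statement: if $\mu$ is a finite complex Borel measure on $[0,1]$ and $\abs{\int_0^1 u^k\,d\mu}\leq Cs^{2k}$ for all $k\geq k_0$, then $\mu|_{(s^2,1]}=0$.

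To prove this, I would consider the Cauchy transform $\psi(z):=\int_0^1 d\mu(u)/(z-u)$, which is holomorphic on $\C\setminus[0,1]$ and, by expanding $(z-u)^{-1}$ as a geometric series at infinity, admits the Laurent expansion $\psi(z)=\sum_{k\geq 0}\sigma_k[F]\,z^{-k-1}$ valid for $|z|>1$. The moment decay $|\sigma_k[F]|\leq Cs^{2k}$ (and the fact that the first $k_0$ coefficients do not affect the radius of convergence) upgrade this to absolute convergence on $\{|z|>s^2\}$, providing a holomorphic extension of $\psi$ to $\C\setminus[0,s^2]$, and in particular a holomorphic extension across the open interval $(s^2,1]$.

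The conclusion will follow from the Sokhotski-Plemelj jump formula, which identifies the distributional jump of $\psi$ across $\R$ with $2\pi i\mu$: holomorphic extension across $(s^2,1]$ forces $\mu|_{(s^2,1]}=0$, and reversing the change of variable $u=r^2$ then gives $F|_{U_s}=0$. The one point that deserves care is this last invocation, since $\mu$ is a genuine complex Borel measure with possible atomic or singular continuous parts, so the jump relation must be read distributionally against test functions in $\cC^\infty_c((s^2,1))$; this is classical, so I would simply cite a standard reference rather than re-derive it.
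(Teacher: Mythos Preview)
Your argument is correct and takes a genuinely different route from the paper's. The paper passes to the Laplace transform $\cL(F)(z)=\int_0^\infty f(e^{-t/2})e^{-zt}\,dt$, uses the bound on $\sigma_k[F]=\cL(F)(k+d/2)$ at integer points together with the Duffin--Schaeffer theorem (a Phragm\'en--Lindel\"of type result) to extend the estimate to all real $z>0$, and then invokes a Laplace-transform vanishing lemma from Simon's inverse spectral theory. Your approach via the Cauchy transform is more self-contained: the moment decay directly enlarges the radius of convergence of the Laurent series of $\psi$, and gluing this with the a~priori holomorphy of $\psi$ on $\IC\setminus[0,1]$ gives the extension to $\IC\setminus[0,s^2]$ without any interpolation step from integers to reals. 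The Stieltjes inversion / Plemelj jump then finishes the job. This is arguably more elementary, while the paper's route stays closer to the Laplace/$A$-amplitude machinery used elsewhere in the article.

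One minor remark: your final caveat about $\mu$ possibly having atomic or singular parts is unnecessary, since you yourself defined $d\mu(u)=\tfrac{1}{2}F_0(\sqrt{u})\,u^{(d-2)/2}\,du$, which is absolutely continuous with $L^1$ density. For such measures the jump relation is entirely classical and no distributional reading is needed; the holomorphic extension across $(s^2,1]$ forces the density to vanish a.e.\ there.
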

\begin{proof}
    If $F|_{U_s}=0 $ it is straightforward to verify that $|\sigma_k[F]|\leq Cs^{2k}$ for all $k \ge 0$. It remains to check the other implication.  If $F=f(|\cdot|)$ one has $\sigma_k[F]=\cL(F)(k+d/2)$ where
    \begin{align*}
        \cL(F)(z):= \int_0^\infty f(e^{-t})e^{-z 2t}dt=\frac{1}{2}\int_0^\infty f(e^{-t/2})e^{-z t}dt, 
    \end{align*}
    and there exists $C_s>0$ such that
    \begin{equation}\label{e:gbdd}
        \left|\int_0^{2|\log s|} f(e^{-t/2})e^{-z t}dt\right| \leq |\cL(F)(z)|+C_s\norm{F}_{L^1(\IB^d)}e^{-z2|\log s|}, \qquad z>d/2.
    \end{equation}
    Define
    \begin{equation*}
        G(z):=e^{z2|\log s|}\int_0^{2|\log s|} f(e^{-t/2})e^{-z t}dt.
    \end{equation*}
    By hypothesis, $|\sigma_k[F]|\leq Cs^{2k}$ for $k\geq k_0$, which implies that 
    \[
        e^{(2k+d)|\log s|}|\cL(F)(k+d/2)|\leq Cs^{-d}, \qquad \forall k\ge k_0.
    \]
     Hence, in view of \eqref{e:gbdd}, $G$  satisfies
    \begin{equation*}
       |G(k+d/2)|\leq  Cs^{-d} + C_s  \norm{F}_{L^1(\IB^d)} \qquad \forall k\ge k_0.
    \end{equation*}
    Thus, $G$ is an entire function that satisfies $\sup_{k\in \N} |G(k+d/2)| \le \tilde{C}$ for some $\tilde{C}>0$ and
    \begin{equation*}
        |G(z)|\leq \left(\int_0^{2|\log s|}|f(e^{-t/2})|dt\right)e^{2|\log s|(\Re z)_+}.
    \end{equation*}
    One can apply a theorem of Duffin and Schaeffer \cite[Theorem~10.5.1]{Boas} to conclude that $M>0$ exists such that $|G(z)|\leq M$ when $z>0$. This implies that
    \begin{equation*}
        \int_0^{2|\log s|} f(e^{-t/2})e^{-z t}dt=\cO(e^{-z2|\log s|}), \qquad z\to\infty.
    \end{equation*}
    Using the local injectivity of the Laplace transform, see \cite[Lemma~A.2.1]{IST1}, we infer that $f(e^{-t/2})=0$ for a.e. $t\in (0,2|\log s|)$, hence $f(r) = 0$ for a.e. $r\in (s,1)$, and the conclusion follows.
\end{proof}
\begin{proof}[Proof of \Cref{c:specrig}]
This result is a straightforward consequence of \Cref{mt:conductivity}, \Cref{mt:conductivity_uniqueness} and \Cref{l:mom}.
To see that two conductivities satisfying \eqref{e:exps} must necessarily coincide on $U_s$, start by noticing that \eqref{e:specdtn} and \eqref{e:exps} imply that $\ga_1|_{\IS^{d-1}}=\ga_2|_{\IS^{d-1}}$ and $\p_\nu\ga_1|_{\IS^{d-1}}=\p_\nu\ga_2|_{\IS^{d-1}}$. 
\Cref{l:mom} ensures then that $(\Delta \gb_1)|_{U_s}=(\Delta \gb_2)|_{U_s}$; therefore, by \eqref{e:traces}, $w:=\gb_1-\gb_2$ solves $\Delta w=0$ on $U_s$ with $w|_{\IS^{d-1}}=\p_\nu w|_{\IS^{d-1}}=0$. 
This implies $w|_{U_s}=0$; the result follows now from \Cref{mt:conductivity_uniqueness}.
\end{proof}

\begin{proof}[Proof of \Cref{mt:gb_structure}]
 By \eqref{e:gb_def_thm1} we have that $\gb = a(\ga) -2a(\ga)T_d(\vb)$. Using this, the case ii) ($d=2$) and the regularity of $\gb_\mathrm{reg}$ in i) ($d \ge 3$) are immediate by \Cref{p:Vb_space} and \Cref{l:regularity_poisson}~ii). 
 
 To finish the proof of i)  notice that
 $T_d(|\cdot|^{-2\ka -2})  = \frac{|x|^{-2\ka}-1}{2\ka(d-2-2\ka)}$ for all $0<\ka<\nu_d$, and $T_d(|\cdot|^{-2})  = -(d-2)^{-1}\log|x|$, $d\ge 3$. 
 This yields using \Cref{p:Vb_space}~i) the representation in $d\ge 3$ with $c_j = - a_j\frac{1}{2\ka_j(d-2-2\ka_j)}$ for $j=1,\dots,J$. Since $a_j \le 0$ for all $j\ge 1$, then $c_j \ge 0$  for all $j\ge 1$. 
 Finally iii) follows immediately from \Cref{p:Vb_space}~iii) using that $\gb$ and $\sqrt{\ga}$ are, respectively, solutions of the elliptic equations $\Delta \gb = 2a(\ga)\vb$ and $\Delta \sqrt{\ga} = V \sqrt{\ga}$.
\end{proof}

\section{The Born approximation at the origin: singularity and resonances}  \label{sec:resonances}

\subsection{Weyl-Titchmarsh functions and DtN maps}
Write
\[
\nu_d := \frac{d-2}{2},\qquad \IC_+ = \{z\in \IC: \Re(z) >0\}.
\]
In the presence of radial symmetry, it is possible to express $\Lp_V$ in terms of the Dirichlet-to-Neumann map of a one-dimensional Schrödinger operator on the half-line, as we now show. We start by considering the one-dimensional boundary value problem
\begin{equation}   \label{e:schrodinger_prob}
\begin{cases}
   - v_z''  + Q   v_z = -z^2 v_z ,    \qquad \text{ in }   \IR_+,\\
    v_z    \in  L^2(\IR_+).
\end{cases}
\end{equation}
When the potential $Q$ belongs to $L^1(\IR_+)$, it is known that there exists $\beta_Q>0$ such that problem \eqref{e:schrodinger_prob} has a unique solution when $z\in\IC_+\setminus [0,\beta_Q]$.
One then defines the \textit{Weyl-Titchmarsh function} of $Q$ as:
\begin{equation*}
    m_Q(-z^2) := \frac{v'_z(0)}{v_z(0)}.
\end{equation*}

When $Q \in L^1(\IR_+)$, it has been shown in Simon's seminal paper \cite{IST1} that there exists a function $A_Q\in L^1_\loc(\R_+)$, called the $A$-amplitude of $Q$, such that
\begin{equation*}
    m_Q(-z^2) = -z - \int_0^\infty  e^{-2zt} A_Q(t) \, dt \qquad \text{for all } z \text{ such that }  \Re(z)>z_Q,
\end{equation*}
where $z_Q = \frac{\|Q\|_1}{2}>0$. As a matter of fact, it is proved in \cite{MaMe24} that the $A$-amplitude coincides with the notion of the Born approximation for the inverse problem of recovering a potential from its Weyl-Titchmarsh function.

The connection of $m_Q$ with the radial Dirichlet-to-Neumann map $\Lp_V$ defined in \Cref{sec:DtNp} is as follows. Given a potential $V \in L^p_\rad(\IB^d, \R)$, with $p>d/2$ such that zero is not a Dirichlet eigenvalue of $-\Delta+V$, and a spherical harmonic $Y_k \in\gH_{k,d}$, then the unique solution $u\in H^1(\IB^d)$ of
\begin{equation*}
\begin{cases}
    (-\Delta  +V) u &=0,    \quad \text{ in }   \IB^d,\\
    \hfill u|_{\IS^{d-1}}  &=Y_k,  
\end{cases} 
\end{equation*}
can be written in the form
\begin{equation*}
u(x)= |x|^{-\nu_d}v_{k+ \nu_d}(-\log|x|)Y_k(x/|x|), 
\end{equation*}
where, for all $k \in \N_0$, $v_{k+\nu_d} \in L^2(\R_+)$ solves the boundary value problem \eqref{e:schrodinger_prob}  with potential $Q=Q_V$ given by any of the following equivalent identities: if $V=q_V(|\cdot|)$ then
\begin{equation} \label{e:q_to_Q}
    Q_V(t) :=   e^{-2t}q_V(e^{-t}), \; t\in \R_+,\qquad \text{and}  \qquad q_V(r) = r^{-2}Q_V(-\log r), \; r\in (0,1].
\end{equation}
Then, a direct computation shows that $m_{Q_V}$ determines the spectrum of $\Lp_V$ by the relation
\begin{equation} \label{e:lambda_m}
    \lp_k[V] =  -m_{Q_V}\left ( -(k +\nu_d)^2  \right )-\nu_d \qquad \forall k\in \N_0.
\end{equation}

In \cite[Theorem 1]{Radial_Born} it was proved that there exists a $k_V>0$ such that
\begin{equation}\label{e:specbp}
    \sigma_k[\vb] = \lp_k[V] -k  ,\qquad  \forall k \in \N_0,\;k>k_V,
\end{equation}
where $\vb$ is defined in terms of the $A$-amplitude as
\begin{equation} \label{e:A_to_VB}
    \vb (x) := |x|^{-2}A_{Q_V}(-\log |x|), \qquad  x\in \IB^d\setminus\{0\}.
\end{equation}
In addition, it is also proved in \cite[Theorem 1]{Radial_Born} that $|\cdot|^{2k}\vb\in L^1(\IB^d)$. 

The main objective in this section is to refine these results when $V$ arises from a conductivity, in order to prove \Cref{p:Vb_space}. The important formula \eqref{e:A_to_VB} which connects the Born approximation $\vb$ to the $A$-amplitude of the potential $Q_V$ will allow us to extract precise information on the singularities of $\vb$ at $0$ from certain asymptotic properties of $A_{Q_V}$.

\begin{remark} \label{r:characterization}
    The work of Remling \cite{Remling03} implies that not all locally integrable functions can be $A$-amplitudes of Schrödinger operators. 
    This implies, together with identities \eqref{e:from_comp} and \eqref{e:A_to_VB}, that non-trivial necessary conditions are required to characterize the subset of $L^1_\rad(\IB^d;\R)$ formed by the Born approximations $\gb$ corresponding to conductivities in $W^{2,p}_\rad(\IB^d)$. This means that \Cref{mt:conductivity} provides a partial, but not total, characterization of the set of rotation-invariant DtN maps.
\end{remark}

\subsection{Asymptotics of \texorpdfstring{$A$}{A}-amplitudes}  \label{sec:asympt_A_amplitude}

In this section, we review the main results from \cite{IST3} that will be needed in the sequel. 

Let $Q\in L^1(\IR_+,\IR)$ be such that
\begin{equation} \label{e:Q_condition}
    \int_{0}^\infty |Q(t)|(1+t) \, dt <\infty.
\end{equation}
We define the $1$-$d$ Schrödinger operator $H_{Q}$, acting on $L^2(\IR_+)$ by
\begin{equation} \label{e:H_def_q}
    H_{Q} u(t) := \left(-\frac{d^2 \,}{dt^2} + Q(t) \right) u(t),  \qquad t\in\IR_+,
\end{equation}
with its domain (corresponding to Dirichlet boundary condition at $0$) 
$$
D(H_Q) = \{ u \in L^2(\IR_+)\,:\, u \in \cA\cC_\loc(\IR_+),\, H_Q u \in L^2(\IR_+),\, u(0) = 0\},
$$
where $Q$ is determined by \eqref{e:q_to_Q}. Note that the operator $(H_Q, D(H_Q))$ is self-adjoint on $L^2(\IR_+)$.
\par\noindent

We now summarize well-known facts, see \cite{IST3} and \cite{Marchenko}. Under the previous assumption, for every $z\in \ol{\C_+}$ there exists a unique solution $\psi(t,z)$ of \eqref{e:schrodinger_prob} such that
\begin{equation}\label{e:jostsol}
    \psi(t,z) = e^{-tz}(1+o(1)),\qquad t\to +\infty.    
\end{equation}
These solutions are known as Jost solutions (they are usually defined using the variable $z'= iz$), and $F(z): = \psi(0,z)$ is the Jost function associated to $Q$. In addition, both functions $F$ and $\psi(t,\cdot)$ are  analytic in $\C_+$. 
Notice that the zeros of $F$ at $\C_+$ occur at the points $z$ such that $-z^2$ is a Dirichlet eigenvalue of $H_{Q}$. 

We will later show that, under suitable assumptions on $Q$, the operator $H_{Q}$ has no eigenvalues in $[0,\infty)$. In addition, it always has a finite number of negative eigenvalues $-\kappa_j^2$, where $j=1,\dots,J$, with $J \in \N_0$ and $0<\kappa_J<\dots<\kappa_1$ (see \cite{IST3}). Recall that the Jost function associated to $Q$ satisfies $F(\ka_j) =0$ for all $j\in J$. It will also be relevant if the Jost function vanishes at $z=0$. In fact, as in \cite{IST3},
we will say that $H_{Q}$ has a zero resonance iff $F(0)=0$.

Let $\ell \ge 0$. Define the weighted  $L^1_\ell(\R_+)$ space with norm
\begin{equation*} 
    \norm{f}_{L^1_\ell(\R_+)} := \int_0^\infty |f(t)|(1+t)^\ell\, dt.
\end{equation*}
\begin{theorem}[{\cite[Theorem 1.3]{IST3}}] \label{l:ramm_sim_2}
    Let  $\ell\ge 3$, $Q \in L^1_\ell(\R_+)$ and $(\ka_j)_{j=1}^J$ as above. 
    Then, there exist some constants $(a_j)_{j=0}^J$, and a function $g \in L^1_{\ell-1}(\R_+)$
    such that
    \begin{equation} \label{e:a_resonances}
        A_Q(t) = g(t) + a_0 + \sum_{j=1}^J a_je^{2t\ka_j}.
    \end{equation}
    The constant $a_0$ vanishes if $H_Q$ has no zero resonances, and one always has $ a_1,\dots,a_J\le 0$. As a consequence,
    \begin{equation} \label{e:m_A_function}
        m_Q(-z^2) = -z - \int_0^\infty  e^{-2zt} A_Q(t) \, dt,  
    \end{equation}
    for all  $z$ such that  $\Re(z)>\ka_1$ if the point spectrum of $H_Q$ is non-empty, $\Re(z)>0$ if it is empty, and $\Re(z)\ge 0$ if, in addition, there are no zero resonances. 
\end{theorem}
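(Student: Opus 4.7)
I would follow the Weyl-Titchmarsh formalism: construct the Jost solution, analyze the meromorphic structure of $m_Q$ on $\overline{\C_+}$, subtract the pole contributions, and obtain $g$ by inverse Laplace transform. The signs of the $a_j$'s come from a spectral/Wronskian computation, and the weighted bound on $g$ comes from propagating the moment condition on $Q$ through the Volterra iteration for the Jost solution.

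\textbf{Jost construction.} First I would realize $\psi(\cdot,z)$ satisfying \eqref{e:jostsol} as the unique fixed point of the Volterra equation
\begin{equation*}
\psi(t,z) = e^{-zt} + \int_t^\infty \frac{\sinh(z(s-t))}{z}\,Q(s)\,\psi(s,z)\,ds.
\end{equation*}
Under $Q\in L^1_\ell(\R_+)$ with $\ell\ge 3$, standard estimates show that $\psi(t,\cdot)$, and likewise $\partial_z^k\psi(t,\cdot)$ for $0\le k\le \ell-1$, extend continuously to $\overline{\C_+}$ and analytically to $\C_+$, with explicit bounds in terms of moments of $|Q|$. Setting $F(z):=\psi(0,z)$, one has $m_Q(-z^2)=\psi'(0,z)/F(z)$; the zeros of $F$ in $\C_+$ are precisely the $\kappa_j$, and a zero resonance corresponds to $F(0)=0$. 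Hence $m_Q(-z^2)$ extends meromorphically to a neighborhood of $\overline{\C_+}$ with simple poles exactly at $\{\kappa_j\}_{j=1}^J$, together with an additional simple pole at $z=0$ in the resonance case.

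\textbf{Residues, signs and the remainder.} Differentiating the Schrödinger equation in the spectral parameter and forming the Wronskian with $\psi$ yields
\begin{equation*}
\operatorname{Res}_{z=\kappa_j}\,m_Q(-z^2) \;=\; \frac{1}{2\kappa_j c_j},
\end{equation*}
where $c_j=\|\psi(\cdot,\kappa_j)\|_{L^2(\R_+)}^2>0$ is the Dirichlet norming constant; the positive sign reflects the Herglotz property of $m_Q$. Using the Laplace identity $\int_0^\infty e^{-2zt}e^{2\kappa_j t}\,dt=\frac{1}{2(z-\kappa_j)}$ (valid for $\Re z>\kappa_j$) and matching singular parts identifies $a_j=-1/(\kappa_j c_j)\le 0$. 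The same computation at $z=0$ produces the coefficient $a_0$, which vanishes iff $F(0)\ne 0$. The remainder
\begin{equation*}
R(z) := m_Q(-z^2) + z + \frac{a_0}{2z} + \sum_{j=1}^J \frac{a_j}{2(z-\kappa_j)}
\end{equation*}
is holomorphic on a neighborhood of $\overline{\C_+}$; Simon's original representation from \cite{IST1} on $\Re z>z_Q$ combined with uniqueness of the Laplace transform gives a measurable $g$ with $R(z)=-\int_0^\infty e^{-2zt}g(t)\,dt$, and then \eqref{e:a_resonances} and \eqref{e:m_A_function} follow by analytic continuation in the claimed half-planes.

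\textbf{Main obstacle.} The technical heart is the weighted integrability $g\in L^1_{\ell-1}(\R_+)$. This amounts to propagating the moment hypothesis $\int_0^\infty(1+t)^\ell|Q(t)|\,dt<\infty$ through the Volterra iteration: one needs uniform control of $\partial_z^k F$ and $\partial_z^k\psi'(0,z)$ for $0\le k\le \ell-1$, and thereby of $\partial_z^k R$ on the imaginary axis. A Paley-Wiener-type argument then converts this boundary regularity into the weighted $L^1$ bound on $g$. The most delicate step is the resonance case, since then the pole at $z=0$ sits on the boundary of the Laplace domain, and it must be extracted cleanly before the Fourier-type inversion is applied.
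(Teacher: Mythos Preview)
Your proposal and the paper's proof are doing very different things. The paper does not reprove the decomposition \eqref{e:a_resonances} or the sign claim at all: it simply cites \cite[Theorem~1.3]{IST3} and \cite[Remark~3, p.~321]{IST3} for those, and the only argument it supplies is the extension of the domain of validity of \eqref{e:m_A_function}, which it obtains by noting that both sides are analytic (resp.\ continuous) on the relevant half-plane and agree on $\Re z>z_Q$ by Simon's original representation. Your proposal, by contrast, is essentially a sketch of a direct proof of the cited result from \cite{IST3}: Jost construction, meromorphic extension of $m_Q$, residue/sign computation, subtraction of the pole contributions, and inverse Laplace transform for the remainder. That outline is the correct one and matches the actual strategy in \cite{IST3}.

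The one place where your sketch is shaky is the weighted bound $g\in L^1_{\ell-1}(\R_+)$. You propose to obtain it by controlling $\partial_z^k R$ on the imaginary axis and then invoking ``a Paley--Wiener-type argument''. But boundedness of $\ell-1$ derivatives of the Laplace transform on $i\R$ only yields, via integration by parts, that $t^{\ell-1}g(t)$ has bounded Fourier transform; it does not directly give $\int (1+t)^{\ell-1}|g(t)|\,dt<\infty$. In \cite{IST3} this step is handled differently: one writes $A_Q$ explicitly through the spectral shift/spectral measure, and the moment hypothesis on $Q$ is propagated to decay of that measure, from which the weighted $L^1$ bound on the remainder follows. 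So your identified ``main obstacle'' is genuine, and the mechanism you suggest for overcoming it would need to be replaced or substantially sharpened.
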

\begin{proof}
     The identity \eqref{e:a_resonances} is a direct consequence of \cite[Theorem 1.3]{IST3}. The constants satisfy $ a_1,\dots,a_J\le 0$ as follows from \cite[Remark 3, p. 321]{IST3}. The fact that \eqref{e:m_A_function} holds for all $\Re(z) >\ka_1$ follows by the analyticity of both sides of the identity in $\{ z\in \IC: \Re(z)> \ka_1 \}$. In the case of empty point spectrum, the same holds in  $\{ z\in \IC: \Re(z)> 0 \}$. Further, if there are no zero resonances, then $A_Q(t)$ belongs to $L^1(\R_+)$ and both sides of \eqref{e:m_A_function} are continuous in $\{ z\in \IC: \Re(z)\ge 0 \}$, so they must be equal.
\end{proof}

\subsection{Singularities of the Born approximation}
We will now describe how the results of the previous section translate into results on the Born approximation for radial Dirichlet-to-Neumann maps $\Lp_V$.  The following Lemma shows, in particular, that as soon as $p>d/2$ and $V \in L^p_\rad(\IB^d, \R)$ we have that  $Q_V$ satisfies \eqref{e:Q_condition}.

\begin{lemma} \label{l:spaces}
    Let $p>d/2$  and $\alpha \ge 0$. 
   Then, there exists a constant $C(\alpha,p,d) >0$ such that, for every $V\in L^p_\rad(\IB^d)$, the potential $Q_V$ given by \eqref{e:q_to_Q} satisfies
    \begin{equation*}
        \norm{Q_V}_{L^1_\alpha(\R_+)} \le C(\alpha,p,d)    \norm{V}_{L^p(\IB^d)}.
    \end{equation*}
    In particular, $C(0,p,d) =  \left(\frac{p-1}{2p-d} \right)^{1-1/p}$.
\end{lemma}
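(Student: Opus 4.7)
The plan is to unwind the change of variables defining $Q_V$, rewrite the weighted norm as a one-dimensional integral on $(0,1)$, and then apply Hölder's inequality with the exponents $p$ and $p/(p-1)$. The role of the hypothesis $p>d/2$ will be to guarantee the integrability of the ``error'' factor arising from Hölder.

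Concretely, starting from \eqref{e:q_to_Q}, I would compute
\begin{equation*}
    \norm{Q_V}_{L^1_\alpha(\R_+)} = \int_0^\infty e^{-2t}|q_V(e^{-t})|(1+t)^\alpha\, dt = \int_0^1 r\,|q_V(r)|(1-\log r)^\alpha\, dr
\end{equation*}
via the substitution $r = e^{-t}$. Next, I would split the integrand as
\begin{equation*}
    r\,|q_V(r)|(1-\log r)^\alpha = \bigl(|q_V(r)|r^{(d-1)/p}\bigr)\cdot\bigl(r^{1-(d-1)/p}(1-\log r)^\alpha\bigr)
\end{equation*}
and apply Hölder's inequality with exponents $p$ and $p' = p/(p-1)$. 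The $L^p$ factor is, up to the sphere factor $|\IS^{d-1}|^{1/p}$, exactly $\norm{V}_{L^p(\IB^d)}$ by the radial polar-coordinates identity. The $L^{p'}$ factor is
\begin{equation*}
    \left(\int_0^1 r^{\frac{p-d+1}{p-1}}(1-\log r)^{\frac{\alpha p}{p-1}}\, dr\right)^{\frac{p-1}{p}}.
\end{equation*}

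The final step is to check convergence of this last integral. Substituting $u = -\log r$ transforms it into
\begin{equation*}
    \int_0^\infty e^{-u(2p-d)/(p-1)}(1+u)^{\alpha p/(p-1)}\, du,
\end{equation*}
which converges exactly because $p>d/2$ makes the exponential decay rate $(2p-d)/(p-1)$ strictly positive; the polynomial weight $(1+u)^{\alpha p/(p-1)}$ is then absorbed for any $\alpha\ge 0$. Setting $\alpha = 0$ gives the explicit value of the integral as $(p-1)/(2p-d)$, which after taking the $(p-1)/p$-th power yields the stated expression for $C(0,p,d)$ up to the (harmless) sphere normalization. There is no real obstacle here; the only point requiring care is tracking the conjugate-exponent algebra so that the hypothesis $p>d/2$ emerges sharply as the integrability threshold.
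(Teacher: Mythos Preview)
Your proposal is correct and follows essentially the same approach as the paper: both rewrite $\norm{Q_V}_{L^1_\alpha(\R_+)}$ via the substitution $r=e^{-t}$ and then apply H\"older's inequality with exponents $p$ and $p'=p/(p-1)$, the condition $p>d/2$ arising as the integrability threshold for the dual factor. The only cosmetic difference is that the paper recasts the one-dimensional integral as an integral over $\IB^d$ (via polar coordinates) before applying H\"older, while you stay on $(0,1)$; after unwinding the sphere factor $|\IS^{d-1}|$ the two computations are identical, and your remark about the ``harmless sphere normalization'' is exactly right.
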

\begin{proof}
    Let $1/p +1/q =1$ and $c_d:=|\IS^{d-1}|^{-1}$. Then
    \begin{align*}
        \norm{Q_V}_{L^1_\alpha(\R_+)} =  \int_{\IR_+} |Q_V(t)| (1 + t)^\alpha \, dt  &=  c_d \int_{\IB^d} |V(x)| (1+ |\log|x||)^\alpha  |x|^{2-d} \, dx 
        \\ &\le  c_d \norm{V}_{L^p(\IB^d)} \left ( \int_{\IB^d} (1+ |\log|x||)^{q\alpha}  |x|^{q(2-d)} \, dx \right)^{1/q},
    \end{align*}
    and the second integral is finite for all $\alpha \ge 0$ and  $1 \le q< d/(d-2)$, which implies $p>d/2$. Computing explicitly the integral when $\alpha =0$ yields the first inequality.
\end{proof}
As a consequence of this result and \Cref{l:ramm_sim_2} we obtain the following description of the singularity at the origin of the Born approximation.
Recall from previous sections  that
\begin{equation} \label{e:d_alpha_norm}
    \norm{f}_{d,\alpha}:=\int_{\IB^d} |f(x)| |x|^{2-d}  (1-\log|x|)^\alpha \, dx <\infty, \qquad  \alpha \ge 0,\,  d\ge 2.
\end{equation}
\begin{theorem}  \label{t:negative_eigenvalues} 
    Let $d\ge 2$,  $V\in L^p_\rad(\IB^d,\IR)$ with $p>d/2$, and $(\ka_j)_{j=1}^J$ such that $(-\ka_j^2)_{j=1}^J$ is the point spectrum of $H_{Q_V}$, as above. There
    exist  some constants $a_0\in \R, \, a_1,\dots,a_J\le 0$, such that  
    \begin{equation*}
    \vb(x) = \vb_\mathrm{reg}(x) + \frac{a_0}{|x|^2} + \sum_{j=1}^J \frac{a_j }{|x|^{2\kappa_j+2}},
    \end{equation*}
    where
    $\vb_\mathrm{reg}$ is a radial function  satisfying $\norm{\vb_\mathrm{reg}}_{d,\alpha}<\infty $ for all $\alpha>0$.
    \\
    In particular,    $a_0=0$ if $H_{Q_V}$ has no zero resonance.
\end{theorem}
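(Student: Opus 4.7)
The plan is to transfer the asymptotic decomposition of the $A$-amplitude provided by \Cref{l:ramm_sim_2} to the Born approximation $\vb$ via identity \eqref{e:A_to_VB}, which reads $\vb(x) = |x|^{-2} A_{Q_V}(-\log|x|)$. All the delicate spectral analysis is already packaged inside the Ramm--Simon theorem, so the argument reduces to verifying hypotheses, performing a change of variables, and checking weighted integrability.

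First, since $V\in L^p_\rad(\IB^d,\IR)$ with $p>d/2$, \Cref{l:spaces} ensures $Q_V\in L^1_\ell(\IR_+)$ for every $\ell\ge 0$. Applying \Cref{l:ramm_sim_2} yields constants $a_0\in\IR$, $a_1,\dots,a_J\le 0$, and a remainder $g\in L^1_{\ell-1}(\IR_+)$ with
\[
A_{Q_V}(t) = a_0 + \sum_{j=1}^{J} a_j e^{2t\kappa_j} + g(t), \qquad t\in\IR_+,
\]
and $a_0=0$ when $H_{Q_V}$ has no zero resonance. This decomposition is unique because the constant function and the exponentials $e^{2t\kappa_j}$ with distinct $\kappa_j>0$ are linearly independent modulo $L^1(\IR_+)$; hence the $a_j$ and $g$ do not depend on $\ell$. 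Letting $\ell\to\infty$ then shows that $g\in L^1_\alpha(\IR_+)$ for every $\alpha\ge 0$.

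Substituting into \eqref{e:A_to_VB} and using $e^{-2\kappa_j\log|x|}=|x|^{-2\kappa_j}$ produces
\[
\vb(x) = \frac{a_0}{|x|^2} + \sum_{j=1}^{J}\frac{a_j}{|x|^{2\kappa_j+2}} + \vb_{\mathrm{reg}}(x), \qquad \vb_{\mathrm{reg}}(x) := |x|^{-2} g(-\log|x|),
\]
which is the claimed decomposition; the function $\vb_{\mathrm{reg}}$ is clearly radial, and the sign and vanishing conditions on the $a_j$ are inherited from \Cref{l:ramm_sim_2}. The required weighted bound follows by passing to polar coordinates and substituting $t=-\log r$ in the integral defining $\norm{\cdot}_{d,\alpha}$, which collapses it to
\[
\norm{\vb_{\mathrm{reg}}}_{d,\alpha} = |\IS^{d-1}|\int_0^\infty |g(t)|(1+t)^\alpha\,dt = |\IS^{d-1}|\,\norm{g}_{L^1_\alpha(\IR_+)} < \infty
\]
for every $\alpha>0$, thanks to the second paragraph. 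There is no serious obstacle here: the only point requiring a moment's care is the uniqueness of the Ramm--Simon decomposition, which is what allows a single function $\vb_{\mathrm{reg}}$ to lie simultaneously in every weighted space $L^1((1+t)^\alpha dt)$ rather than only in one such space per choice of $\ell$.
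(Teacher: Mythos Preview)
Your proof is correct and follows essentially the same route as the paper: apply \Cref{l:spaces} to place $Q_V$ in every $L^1_\ell(\IR_+)$, invoke \Cref{l:ramm_sim_2} for the decomposition of $A_{Q_V}$, and transport it to $\vb$ via \eqref{e:A_to_VB}. Your version is in fact slightly more explicit than the paper's, since you spell out the uniqueness argument that lets a single $g$ belong to all the weighted spaces and you carry out the change of variables showing $\norm{\vb_{\mathrm{reg}}}_{d,\alpha}=|\IS^{d-1}|\,\norm{g}_{L^1_\alpha(\IR_+)}$.
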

\begin{proof}
    Let $Q_V$ be given by \eqref{e:q_to_Q}. Then $Q \in L^1_\ell(\R_+)$ for all $\ell \ge 1$, by \Cref{l:spaces}. Therefore it satisfies the conditions to apply \Cref{l:ramm_sim_2}. Thus
    \begin{equation*}
        A_{Q_V}(t) = g(t) + a_0 + \sum_{j=1}^J a_je^{2t\ka_j},
    \end{equation*}
    for some constants $(a_j)_{j=0}^J$. Here $g\in L^1_{\ell}(\IR_+)$ for all $\ell \ge 0$. The constant $a_0=0$ if $H_{Q_V}$ has no zero resonances. Thus, the statement for $\vb$ follows directly using \eqref{e:A_to_VB}, with $\vb_\mathrm{reg}(x) : = |x|^{-2}g(-\log |x|)$.
\end{proof}


\Cref{t:negative_eigenvalues} has interesting consequences. In the first place, it shows that $\vb$ can be a very singular object: one can always find smooth radial potentials $V$ such that $H_{Q_V}$ has an eigenvalue $\kappa_1$ as large as desired (for example, a smooth potential with large negative values).  
On the other hand, if one imposes conditions on the potentials that bound the value of $\ka_1$, then the Born approximation can be an integrable function. We will examine this in the following section.
\begin{remark}
If $V\in L^p_\rad(\IB^d,\IR)$, $p>d/2$ and $V(x) \ge 0$ $a.e.$ on $\IB^d$, then $H_{Q_V}$ has no bound states nor zero resonances.
 As a consequence of \Cref{t:negative_eigenvalues} we see that $\norm{\vb}_{d,\alpha}<\infty$ for all $\alpha \ge 0$, which is stronger that $\vb\in L^1(\IB^d)$.
\end{remark}
\begin{remark}
    \Cref{t:negative_eigenvalues} can be improved to take into account the complex resonances of $H_{Q_V}$, which will add oscillatory terms to the asymptotic expansion. If one assumes that   for some $-\infty<\delta<0$ the potential satisfies
      $ \int_{\IB^d} |V(x)| |x|^{2-d+\delta} \, dx <\infty,$ 
   (which always holds if $V\in L^p(\IB^d)$ with $p>d/2$)  then, for a fixed $\delta<\delta'<\infty$,
    \begin{equation*}
        \vb(x) = \sum_{j=1}^J \frac{a_j }{|x|^{2\kappa_j+2}} + \sum_{j=1}^M \frac{b_j}{|x|^{2\lambda_j+2} }+ \sum_{j=1}^N c_j \frac{\cos(\theta_j-2\nu_j \log|x| )}{|x|^{2\mu_j +2}} + \vb_\mathrm{reg}(x),
    \end{equation*}
    where $0<\kappa_J<\dots<\kappa_1$ corresponds to the negative eigenvalues $-\kappa_j^2$ of $H_{Q_V}$, $\delta' <\lambda_M \le \dots \le \lambda_1 \le 0$ corresponds to the real resonances larger than $\delta'$, $\mu_j \pm i \nu_j$ with $\delta' <\mu_j<0$ correspond for all $j = 1,\dots, N$ to the complex resonances  with real part larger than $\delta'$, and, finally, $\vb_\mathrm{reg}(x) $ is a radial function  satisfying that
    $\int_{\IB^d} |x|^{2-d + \delta'}|\vb_\mathrm{reg}(x)| \, dx<\infty$. This follows from \cite[Theorem 1.4]{IST3} using the same strategy as in the proof of  \Cref{t:negative_eigenvalues}. This oscillatory behavior has been observed in  the numerical reconstructions of $\vb(x)$ in \cite{BCMM23_n}.
\end{remark}
Another consequence of \Cref{l:ramm_sim_2} is the following.
\begin{lemma} \label{l:N_set}
     Let $d\ge 2$,  $V\in L^p_\rad(\IB^d,\IR)$ with $p>d/2$. Then for every $k\in \cN$
    \begin{equation*} 
        \int_{\IB^d} |\vb(x)| |x|^{2k} \, dx <\infty, \quad \text{and}\quad \sigma_k[\vb] = \lp_k[V]-   k,
    \end{equation*}
    where $\cN \subseteq \N_0$ is the set defined as follows:
        \begin{enumerate}[i)]
        \item  If the point spectrum of $H_{Q_V}$ is non-empty, 
        $\cN = \{k\in \N_0: k> \ka_1 -\nu_d \}$.
        \item If  the point spectrum of $H_{Q_V}$ is empty then:
        \begin{enumerate}
            \item If $d>2$,  $\cN = \N_0$.
            \item If $d=2$, $\cN = \N_0$ provided that $H_{Q_V}$ has no zero resonance, and $\cN = \N$ otherwise.
        \end{enumerate}
    \end{enumerate}
\end{lemma}
\begin{proof}
    Let $Q_V$ be given by \eqref{e:q_to_Q}. Then $ Q_V\in L^1_\ell(\R)$ for all $\ell \ge 3$ by \Cref{l:spaces}, so \cref{l:ramm_sim_2} holds.
    Then, by \eqref{e:lambda_m}, \eqref{e:m_A_function} and \eqref{e:A_to_VB} in that order:
    \begin{multline*}
        \lp_k[V] -k  
        = -m_{Q_V}\left ( -(k +\nu_d)^2  \right )-(k+\nu_d)  \\
        =  \int_0^\infty  e^{-2kt}  A_{Q_V}(t) e^{-(d-2)t} \, dt 
        = \frac{1}{|\IS^{d-1}|}\int_{\IB^d} \vb(x) |x|^{2k} \, dx =\sigma_k[\vb] ,
    \end{multline*}
    where the integrals are always absolutely convergent.
    Notice that, by \Cref{l:ramm_sim_2}, the second equality holds when $k +\nu_d>\ka_1$ if the spectrum of $H_{Q_V}$ is non-empty, when $k +\nu_d>0$ if it is empty, and when $k +\nu_d \ge 0$ if, in addition, there are no zero resonances. This yields all the cases enumerated in the statement.
\end{proof}

\subsection{The conductivity case \texorpdfstring{and proof of \Cref{p:Vb_space}}{}}    \label{sec:specconduc}

If the potential $V$ arises from a conductivity, then the operator $H_{Q_V}$ is always uniformly bounded  from below and $\kappa_1< \nu_d$, as we now show. As a consequence, the singularities of the Born approximation will be of  integrable type in this case. We first need the following lemma.
\begin{lemma} \label{l:0eigen}
Assume $Q_V\in {L}^1_2(\IR_+)$. 
Then $0$ is not an eigenvalue of $H_{Q_V}$.
\end{lemma}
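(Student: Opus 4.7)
The plan is to rule out the existence of any nontrivial $L^2(\IR_+)$ solution of the homogeneous equation $-u'' + Q_V u = 0$. Once this is done, the lemma follows immediately, since an eigenfunction at energy $0$ would in particular be a nonzero $L^2$ solution satisfying the Dirichlet condition $u(0)=0$.

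Under the hypothesis $Q_V \in L^1_2(\IR_+)$ we have in particular $\int_0^\infty (1+t)|Q_V(t)|\,dt < \infty$, the classical integrability condition under which the Jost solution $\psi(\cdot,0)$ at zero energy can be constructed through the Volterra integral equation
\[
\psi(t,0) = 1 + \int_t^\infty (s-t)\, Q_V(s)\, \psi(s,0)\, ds,
\]
and satisfies $\psi(t,0) \to 1$ together with $\psi'(t,0) \to 0$ as $t \to \infty$ (see \cite{Marchenko}). Picking a solution $\chi$ of $-\chi''+Q_V\chi=0$ linearly independent of $\psi(\cdot,0)$ and normalized so that the constant Wronskian $W(\psi(\cdot,0),\chi) = \psi(\cdot,0)\chi' - \psi'(\cdot,0)\chi$ equals $1$, the asymptotics of $\psi(\cdot,0)$ and $\psi'(\cdot,0)$ force $\chi'(t) \to 1$ and hence $\chi(t) \sim t$ as $t \to \infty$.

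Every solution of $-u''+Q_V u = 0$ is then a linear combination $u = a\, \psi(\cdot,0) + b\, \chi$. For such a $u$ to lie in $L^2(\IR_+)$ one needs $u(t) \to 0$ at infinity, which, given $\psi(t,0)\to 1$ and $\chi(t)\to\infty$, forces $a=0$ and then $b=0$. Hence $u\equiv 0$, and $0$ is not an eigenvalue of $H_{Q_V}$.

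The single nontrivial ingredient is the asymptotic description of the two fundamental solutions at zero energy; this is purely a statement about one-dimensional Schrödinger operators with a decaying potential and requires no further structural information on $Q_V$ (in particular, the fact that $Q_V$ arises from a conductivity is not used here). Moreover the weight in $L^1_2$ is stronger than what the argument actually needs, since the Volterra construction and the asymptotics $\psi(\cdot,0)\to 1$, $\psi'(\cdot,0)\to 0$ hold already under $Q_V \in L^1_1(\IR_+)$.
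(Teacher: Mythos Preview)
Your proof is correct and follows the same idea as the paper's: both establish that every zero-energy solution behaves like an affine function $A+Bt$ at infinity (the paper via a direct Gronwall estimate on the Volterra equation for a general solution, you via the Jost solution plus a second fundamental solution), whence no nontrivial solution lies in $L^2$. Two minor points of presentation: the Wronskian identity $\psi\chi'-\psi'\chi=1$ alone does not yield $\chi'(t)\to 1$ without first controlling the growth of $\chi$ (cleaner is to build $\chi$ by reduction of order, $\chi(t)=\psi(t,0)\int_a^t\psi(s,0)^{-2}\,ds$ for large $a$, from which $\chi(t)\sim t$ is immediate), and ``$u\in L^2\Rightarrow u(t)\to 0$'' is false for general $L^2$ functions, though here the explicit asymptotic $u(t)=a+bt+o(1)$ forces $a=b=0$ regardless.
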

\begin{proof}
At zero energy, any solution $u$ to $H_{Q_V}u=0$ on $\IR_+$ satisfies the Volterra equation
\[
u(x)=A+Bx+\int_x^\infty (t-x)\,Q_V(t)\,u(t)\,dt.
\]
Writing $w(x)=u(x)-(A+Bx)$, we obtain, using that $t-x\le t$ and $(1+t^2)Q_V\in L^1(\IR_+)$,
\[
|w(x)|\le (|A|+|B|)\int_x^\infty t(1+t)|Q_V(t)|\,dt
 +\!\int_x^\infty t|Q_V(t)|\,|w(t)|\,dt.
\]
Gronwall’s lemma gives:
\begin{equation*}
    |w(x)|\leq (|A|+|B|)M(x)\exp\left(\int_x^\infty t|Q_V(t)|\,dt\right), \qquad M(x):=\int_x^\infty t(1+t)|Q_V(t)|\,dt,
\end{equation*}
since $M(x)$ is a  decreasing function. This shows that $w(x)\to0$ as $x\to\infty$. If $u\in L^2(\IR_+)$ this forces $A=B=0$, and therefore $w=0$.
\end{proof}

\begin{lemma}  \label{l:eigenvalues_cond}
    Let $p>d/2$ and $\gamma \in W^{2,p}_\rad(\IB^d)$ be a  conductivity, and let $V\in L^p_\rad(\IB^d)$ be given by \eqref{e:V_to_gamma}. Let $H_{Q_V}$ be the associated Hamiltonian defined in \eqref{e:H_def_q}. Then
    \begin{enumerate}[i)]
        \item   If $d>2$, the point spectrum of $H_{Q_V}$ is contained in $(-\nu_d^2,0)$.
        \item   If $d=2$, the point spectrum of $H_{Q_V}$ is empty and it has no zero resonance.
    \end{enumerate}
\end{lemma}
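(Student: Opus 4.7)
The plan is to exploit the fact that $\sqrt{\gamma}$ is, by the very definition \eqref{e:V_to_gamma} of $V$, a strictly positive zero-energy solution of $(-\Delta+V)u=0$ on $\IB^d$, and to transport this via the change of variable \eqref{e:change_var_radial} into a positive solution of the one-dimensional Schrödinger equation on $\IR_+$ at spectral parameter $-\nu_d^2$. A Hardy-type ground state substitution will then upgrade this to the operator inequality $H_{Q_V}\geq -\nu_d^2$, with strict inequality on the Dirichlet realization.

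Writing $\gamma_0$ for the radial profile of $\gamma$ and applying \eqref{e:change_var_radial} with $k=0$ to $u=\sqrt{\gamma}$, I would first verify that
\[
v(t):=e^{-\nu_d t}\sqrt{\gamma_0}(e^{-t})
\]
solves $-v''+Q_V v=-\nu_d^2 v$ on $\IR_+$ and is strictly positive on $[0,\infty)$, with $v(0)=\sqrt{\gamma_0}(1)>0$ (using $\gamma\geq c>0$ together with the Sobolev embedding $W^{2,p}(\IB^d)\hookrightarrow \cC(\ol{\IB^d})$ valid for $p>d/2$). Then, for any $\phi\in D(H_{Q_V})$, the substitution $\phi=v\psi$ gives, after integration by parts,
\[
\int_0^\infty \bigl[(\phi')^2+(Q_V+\nu_d^2)\phi^2\bigr]\,dt = \int_0^\infty v^2 (\psi')^2\,dt + \bigl[vv'\psi^2\bigr]_0^\infty.
\]
The boundary term at $t=0$ vanishes because $\phi(0)=0$ and $v(0)>0$ force $\psi(0)=0$; the one at infinity vanishes since $\phi,\phi'$ decay exponentially (being multiples of a Jost solution) while $v$ is bounded and $v'(t)\to 0$.

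The resulting inequality $H_{Q_V}+\nu_d^2\geq 0$ implies that any eigenvalue $-\kappa^2$ of $H_{Q_V}$, with eigenfunction $\phi=v\psi$, satisfies $(\nu_d^2-\kappa^2)\norm{\phi}_{L^2}^2=\int_0^\infty v^2(\psi')^2\,dt\geq 0$, so $\kappa\leq\nu_d$; equality would force $\psi'\equiv 0$ and hence $\phi\equiv 0$ (since $\psi(0)=0$), yielding the strict bound $\kappa<\nu_d$. This proves (i) for $d>2$ and the emptiness of the discrete spectrum in (ii). For the absence of a zero resonance when $d=2$: here $v(t)=\sqrt{\gamma_0}(e^{-t})\to\sqrt{\gamma_0}(0)>0$ as $t\to\infty$, so $v/\sqrt{\gamma_0}(0)$ is precisely the Jost solution \eqref{e:jostsol} at $z=0$, and the Jost function satisfies $F(0)=\sqrt{\gamma_0}(1)/\sqrt{\gamma_0}(0)\neq 0$. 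The main technical point will be the justification of $[vv'\psi^2]_\infty=0$ under the weak regularity $p>d/2$, since $v'$ involves the radial derivative $(\sqrt{\gamma_0})'$ which need not be bounded near the origin; a radial Sobolev estimate of the form $(\sqrt{\gamma_0})'(r)=O(r^{-(d/p-1)})$ with $d/p-1<1$ should suffice to dominate $vv'\psi^2$ by an integrable, vanishing quantity in both cases $d=2$ and $d>2$.
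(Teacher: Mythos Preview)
Your approach is essentially the same as the paper's: both exploit the positive solution $v(t)=e^{-\nu_d t}\sqrt{\gamma_0(e^{-t})}$ of $-v''+Q_Vv=-\nu_d^2 v$ via a ground-state substitution, and both identify $v$ (when $d=2$) with the Jost solution at $z=0$ to rule out the zero resonance. The paper packages the substitution as a factorization $H_{Q_V}+\nu_d^2=B^\ast B$ with $B=-\partial_t+(\log v)'$, which is algebraically your identity $\int[(\phi')^2+(Q_V+\nu_d^2)\phi^2]=\int v^2(\psi')^2$.

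Two small points. First, your justification of the boundary term at infinity is slightly off: a general $\phi\in D(H_{Q_V})$ is \emph{not} a multiple of a Jost solution, so the exponential decay you invoke is only available for eigenfunctions. That is enough for the eigenvalue bound you actually need, but a cleaner route (the one implicit in the paper) is to prove the quadratic-form identity first for $\phi\in C_c^\infty(\IR_+)$, where no boundary term arises, and pass to $H^1_0(\IR_+)$ by density; this avoids your radial-Sobolev estimate on $(\sqrt{\gamma_0})'$ entirely. Second, the paper also separately excludes $0$ as an eigenvalue (via a Volterra/Gronwall lemma) and positive embedded eigenvalues (via the standard $L^1$-potential Jost asymptotics); you omit this, but for the \emph{discrete} spectrum as stated it is automatic since $\sigma_{\mathrm{ess}}(H_{Q_V})=[0,\infty)$.
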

\begin{proof}
    Let $\gamma(x)$ such that $\ga(x) = \ga_0(|x|)$, and define $\phi(t) := e^{-\nu_d t}\sqrt{\ga_0(e^{-t})}$.
    From \eqref{e:V_to_gamma}, \eqref{e:H_def_q},  \eqref{e:q_to_Q} and straightforward computations, it follows that 
    \begin{equation*}
        Q_V(t) =\frac{\phi''(t)}{\phi(t)}-\nu_d^2.
    \end{equation*}
    Let $w(t):=\frac{\phi'(t)}{\phi(t)}=(\log \phi(t))'$. 
    Then $Q_{V}$ can be written explicitly in terms of $w$ as
    \[
    Q_V(t) =
        \left(\frac{\phi'(t)}{\phi(t)}\right)'+ \left(\frac{\phi'(t)}{\phi(t)}\right)^2-\nu_d^2
         = w'(t)+w(t)^2-\nu_d^2.
    \]
    With this representation for $Q_V$, one can prove that one has the factorization
    \[
    H_{Q_{V}}+\nu_d^2 = B^{*}B,\qquad 
    B:=-\partial_t +w(t), \quad B^{*}=\partial_t+w(t).
    \]
    Consequently, for every $u\in H^1_0(\R_+)$, one has that
    \begin{multline*}
      (u,(H_{Q_V}+\nu_d^2)u)_{L^2(\R_+)} =  \int_{0}^{\infty}
      |-\partial_t u(t)+w(t)u(t)|^{2}
      \,dt = \int_{0}^{\infty}\phi(t)^{2}\left|\left(\frac{u(t)}{\phi(t)}\right)'\right|^{2}\,dt \geq 0.
    \end{multline*}
    Notice that $(u,(H_{Q_V}+\nu_d^2)u)_{L^2(\R_+)}$ is always positive in $H^1_0(\R_+)$ since the second integral only vanishes for $u =c\phi$ with $c\in\IC$, but $\phi(0)=\sqrt{\ga_0(1)}>0$, hence $\phi \notin H^1_0(\R_+)$. Therefore
    \[
    \Spec_{H^1_0}(H_{Q_{V}}) \subseteq (-\nu_d^2,\infty).
    \]
    By \Cref{l:0eigen}, $0$ is not an eigenvalue $H_{Q_V}$. By \cite[Lemma 3.1.1]{Marchenko}, for $\ka \in \R$ the solutions of $ - v''  + Q_V   v = \ka^2 v_z$ has two independent solutions $v_\pm$ that behave as $e^{\pm i \ka  t}$ for $t\to +\infty$, so $\ka^2 \in (0,\infty)$ can never  be an eigenvalue of $H_{Q_V}$. 
    This ensures the absence of embedded eigenvalues in $[0,\infty)$ (since $Q_V \in L^1_\alpha(\IR_+) \subset L^1(\IR_+)$, one can also apply the classical result of Kneser--Weyl~\cite{Kneser1926,Weyl1910} --see also Simon~\cite{Simon18}-- to prove the absence of embedded eigenvalues in $(0,\infty)$). 
    Hence,
    it follows that the point spectrum of $H_{Q_V}$ must be contained in $(-\nu_d^2,0)$ if $d>2$, and must be empty if $d=2$.

    It remains to prove that there is not a zero resonance when $d=2$. Notice that
    \[
      (H_{Q_{V}}+\nu_d^{2})\phi=0 ,\qquad d\ge 2,
     \] 
     so, if $d=2$, we have $H_{Q_{V}}\phi=0$. Since $\phi(t) \to \sqrt{\ga(0)}>0$ as $t \to \infty$, then
    \[
         \psi(t,0) = \frac{1}{\sqrt{\ga(0)} } \phi(t) ,
    \]
    is the $z=0$ Jost solution associated to $H_{Q_{V}}$ that was introduced in \eqref{e:jostsol}. 
     The Jost solution is unique and we have $\sqrt{\ga(0)}\psi(0,0) = \phi(0) = \sqrt{\ga_0(1)}>0$. 
     This implies that $\psi(0,0)>0$ and that $H_{Q_{V}}$ has no Dirichlet zero resonance.
\end{proof}

We can now prove \Cref{p:Vb_space}.
\begin{proof}[Proof of \Cref{p:Vb_space}] 
    If $V$ is given by \eqref{e:V_to_gamma}, then $V \in L^{p}_\rad(\IB^d,\R)$, since $\gamma(x) >c>0$ in $\IB^d$. 
    From \Cref{t:negative_eigenvalues}, \Cref{l:eigenvalues_cond}, and \Cref{l:N_set} it follows there exists a solution $\vb$ of the moment problem \eqref{e:moment_prob_pot}. 
    This solution is unique as follows from  \cite[Equation 1.20]{BCMM22}, which is reproduced here in \eqref{e:ft_moments}. 
    Assertions i) and ii) are immediate from \Cref{t:negative_eigenvalues}, \Cref{l:N_set}, and \Cref{l:eigenvalues_cond}. 
    Assertion iii) is proved in \cite[Theorem~5]{Radial_Born}.
    Finally, iv) is an immediate consequence of i) and ii).
\end{proof}
\subsection{Explicit examples}\label{sec:ex}

Here we present a family of conductivities for which the Born approximation can be computed explicitly. 

Let $d\geq 2$, $\mu>0$ and $\nu\geq 0$. Define
\begin{equation*}
    \rho_{d,\mu,\nu}(r):=\begin{cases}\displaystyle\frac{1}{\nu+\nu_d}\left(\frac{2\mu}{1+\alpha r^{2\mu}} +\nu_d-\mu\right), & (d,\nu)\neq(2,0)\medskip\\
    \displaystyle\mu\frac{1-r^{2\mu}}{1+ r^{2\mu}}, & (d,\nu)=(2,0)
    \end{cases}
    ,\qquad 
    \alpha := \frac{\mu-\nu}{\mu+\nu},
\end{equation*}
and note that $\rho_{d,\mu,\nu}(r)>0$ for every $r\in (0,1)$.
\begin{proposition}
    Let $d\geq 2$, $\mu>0$ and $\nu\geq 0$ ($\nu>0$ when $d=2$) and set 
    \begin{equation}
        \ga_{d,\mu,\nu}(x):=\rho_{d,\mu,\nu}(|x|)^2.
    \end{equation}
    Then $\ga_{d,\mu,\nu}$ is a conductivity in $W^{2,p}(\IB^d)$ for some $p>d/2$ (every $p$ when $\mu\geq1$) such that $\ga_{d,\mu,\nu}|_{\IS^{d-1}}=1$ and whose Born approximation is given by
    \begin{equation}\label{e:gbex}
        \gb_{d,\mu,\nu}(x)=
        \begin{cases}
        \displaystyle
        1+\frac{(\nu^{2}-\mu^{2})}{\nu(\nu+\nu_d)}(|x|^{2\nu}-1), & \nu>0,\\[10pt]
        \displaystyle
        1-\frac{2\mu^{2}}{\nu_d}\,\log|x|, & \nu=0,\; d\geq 3.
        \end{cases}
    \end{equation}
\end{proposition}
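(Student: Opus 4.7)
My plan is to invoke the uniqueness part of \Cref{mt:conductivity}: it suffices to verify that the function $\gb_{d,\mu,\nu}$ in \eqref{e:gbex} satisfies the trace identities \eqref{e:traces} together with the moment identity \eqref{e:mompb} for $\ga=\ga_{d,\mu,\nu}$. Before that I have to verify that $\ga_{d,\mu,\nu}$ actually lies in the admissible class. Since $\alpha=(\mu-\nu)/(\mu+\nu)\in(-1,1)$, the denominator $1+\alpha r^{2\mu}$ is bounded away from zero on $[0,1]$, and a short case analysis on the sign of $\alpha$ gives $\rho_{d,\mu,\nu}\ge c>0$ on $[0,1]$. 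The identity $1+\alpha=2\mu/(\mu+\nu)$ yields $\rho_{d,\mu,\nu}(1)=1$. The only potentially nonsmooth factor is $r^{2\mu}$, which belongs to $W^{2,p}(\IB^d)$ for every $p<\infty$ if $\mu\ge 1$ and for some $p>d/2$ if $\mu<1$.

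Checking \eqref{e:traces} is elementary once $\rho'(1)$ is computed from the explicit formula. For the moment identity, the left-hand side $\sigma_k[\gb_{d,\mu,\nu}]$ is computed directly from \eqref{e:gbex} using the elementary moments
\[
\sigma_k[1]=\frac{1}{2(k+\nu_d+1)},\quad \sigma_k[|\cdot|^{2\nu}]=\frac{1}{2(k+\nu_d+\nu+1)},\quad \sigma_k[-\log|\cdot|]=\frac{1}{(2k+d)^2},
\]
yielding a simple rational function of $k$. The main obstacle, and heart of the argument, is computing $\lambda_k[\ga_{d,\mu,\nu}]$ in closed form. For this I go through the Schrödinger reformulation: the potential $V=\Delta\rho/\rho$ and the Liouville change \eqref{e:q_to_Q} produce a half-line potential $Q_V$, and \eqref{e:lambda_m} reduces the problem to evaluating the Weyl–Titchmarsh function $m_{Q_V}$ at the points $z_k=k+\nu_d$.

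The decisive structural observation, essentially built into the choice of $\rho_{d,\mu,\nu}$, is that $\phi(t):=e^{-\nu_d t}\rho_0(e^{-t})$ is an explicit $L^2$-at-infinity solution of $H_{Q_V}\phi=-\nu_d^2\phi$ (as in the proof of \Cref{l:eigenvalues_cond}) of the form $e^{-\nu_d t}$ times a Möbius function of $e^{-2\mu t}$. This identifies $Q_V$ as a Bargmann-type reflectionless potential with a single spectral parameter, whose family of Jost solutions admits the closed-form ansatz
\[
\psi(t,z)=e^{-zt}\,\frac{1+c(z)\,e^{-2\mu t}}{1+\alpha\,e^{-2\mu t}},
\]
with $c(z)$ determined by substituting into $-\psi''+Q_V\psi=-z^2\psi$ and matching coefficients in powers of $e^{-2\mu t}$. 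Then $m_{Q_V}(-z_k^2)=\psi'(0,z_k)/\psi(0,z_k)$ gives $\lambda_k[V]$ as a rational function of $k$, and \eqref{e:ga_eigenvalue} transfers this to $\lambda_k[\ga_{d,\mu,\nu}]$; inserting the result into \eqref{e:mompb} and comparing with the already-computed $\sigma_k[\gb_{d,\mu,\nu}]$ becomes a routine algebraic identity in $k$. The case $\nu=0$, $d\ge 3$ is either the $\nu\to 0^+$ limit, where L'Hôpital produces the $\log|x|$ factor, or the direct computation with $\alpha=1$, in which case $H_{Q_V}$ develops a zero resonance and, by \Cref{t:negative_eigenvalues}, the $A$-amplitude of $Q_V$ acquires a nonzero constant term that yields exactly the $-(2\mu^2/\nu_d)\log|x|$ singularity via \eqref{e:A_to_VB} and \eqref{e:from_comp}.
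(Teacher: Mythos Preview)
Your approach is correct but takes a genuinely different route from the paper. You propose to verify the moment identity \eqref{e:mompb} directly: compute the eigenvalues $\lambda_k[\ga_{d,\mu,\nu}]$ in closed form by exhibiting the explicit Jost solutions of the associated half-line Schr\"odinger operator (the Bargmann ansatz $\psi(t,z)=e^{-zt}(1+c(z)e^{-2\mu t})/(1+\alpha e^{-2\mu t})$ does work, since $Q_V(t)=-8\mu^2\alpha\,e^{-2\mu t}/(1+\alpha e^{-2\mu t})^2$), then match against the explicitly computable moments $\sigma_k[\gb_{d,\mu,\nu}]$. The paper instead bypasses the eigenvalue computation entirely: it computes $V=\Delta\sqrt{\ga}/\sqrt{\ga}$, quotes the closed form $\vb(x)=2(\nu^2-\mu^2)|x|^{2(\nu-1)}$ from \cite{Radial_Born} (these are precisely the potentials of \cite[Section~11]{IST2} after \eqref{e:q_to_Q}), and then identifies $\gb$ via the Poisson relation $\Delta\gb=2a(\ga)\vb$ with $\gb|_{\IS^{d-1}}=1$, invoking uniqueness for the Poisson problem rather than for the moment problem. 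Your route is more self-contained (you essentially redo the \cite{IST2} calculation that the paper cites) and makes transparent why $\lambda_k$ is rational in $k$; the paper's route is shorter and highlights the structural link $\gb\leftrightarrow\vb$ via \eqref{e:from_comp}. A minor remark: once you verify \eqref{e:mompb}, the trace identities \eqref{e:traces} are automatic from the uniqueness statement in \Cref{mt:conductivity}, so that step is redundant.
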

\begin{proof}
    Direct computation shows that:
    \begin{equation*}
        \Delta\sqrt{\ga_{d,\mu,\nu}}=V\sqrt{\ga_{d,\mu,\nu}}, \qquad V(x):=-8\mu^2 \alpha \frac{|x|^{2(\mu-1)}}{\left(1+ \alpha |x|^{2\mu} \right)^2}. 
    \end{equation*}
    In \cite{Radial_Born}, it is shown that the Born approximation of $V$ is given by 
    \begin{equation*}
    \vb (x) =  2(\nu^2-\mu^2)|x|^{2(\nu-1)}.
    \end{equation*}
    These potentials correspond, after performing the change of variables \eqref{e:q_to_Q}, to those studied in \cite[Section~11]{IST2}. One can also check, by direct computation, that if $\gb$ is given by \eqref{e:gbex} then
    \begin{equation*}
        \Delta \gb_{d,\mu,\nu} = 2\vb,\qquad \gb_{d,\mu,\nu}|_{\IS^{d-1}}=1.
    \end{equation*}
    The claim then follows from relation \eqref{e:from_comp} and uniqueness of solutions to the Poisson equation.
\end{proof}
This example illustrates some of the results in this section.
\begin{remark} \
    \begin{enumerate}[i)] 
        \item The parameter $-\nu$ represents the unique real resonance of $H_{Q_V}$, (see  \cite{IST2}, case 2, p. 636).
        \item The presence of the zero resonance for $d\geq 3$ results in a Born approximation $\gb$ with a logarithmic singularity at the origin (and $\vb$ with a $|x|^{-2}$ singularity), as predicted by \Cref{mt:gb_structure} (see \cref{f:ex_2}).
        \item In the case $d=2$ and $\nu =0$ there is a zero resonance and  $\ga_{2,\mu,0}$ is a degenerate conductivity, since it vanishes on $\IS^1$. This is consistent with \Cref{l:eigenvalues_cond}~ii).
    \end{enumerate}
\end{remark}

\subsection{\texorpdfstring{$\vb$}{VB} and spectral asymptotics of \texorpdfstring{$\Lp_V$ }{the DtN map}}\label{sec:spa} 

To conclude this section, we prove a result analogous to \Cref{c:specrig} in the context of DtN associated to Schrödinger operators.
\begin{theorem}\label{thm:spa}
Let $d\geq 2$, $p>d/2$ and choose $s\in(0,1)$. For any two potentials $V_1,V_2\in L^{p}_\rad(\IB^d,\IR)$ such that $0\notin \Spec_{H^1_0(\IB^d)} (-\Delta+V_j)$, $j=1,2$, the following are equivalent.
\begin{enumerate}[i)]
    \item $V_1|_{U_s}=V_2|_{U_s}$.
    \item The following spectral asymptotics hold for $\Lp_{V_1}-\Lp_{V_2}$:
    \begin{equation*}
        \lp_k [V_1]-\lp_k [V_2]=\cO(s^{2k}),\qquad k\to\infty.
    \end{equation*}
\end{enumerate}
\end{theorem}
 The proof is based on \Cref{l:mom} and results from \cite{Radial_Born}. In fact, following ideas from that paper, one could remove the assumption  $0\notin \Spec_{H^1_0(\IB^d)} (-\Delta+V_j)$ and assume weaker regularity conditions than $V_j\in L^{p}_\rad(\IB^d,\IR)$.
\begin{proof}
	Start by noting that \eqref{e:specbp} implies that
	\begin{equation}\label{e:diffpot}
	\lp_k [V_1]-\lp_k [V_2] = \sigma_k[\vb_1-\vb_2],\qquad k>k_0:=\max(k_{V_1},k_{V_2}).
	\end{equation}
    Then, if 
    \begin{equation*} \label{e:diffpot_2}
        F(x) : = (\vb_1(x)-\vb_2(x))|x|^{2k_0},
    \end{equation*}
    one has that $F\in L^1_\rad(\IB^d)$.
    If assumption i) holds, then by  \cite[Theorem 2]{Radial_Born} $\vb_1|_{U_s}=\vb_2|_{U_s}$, which yields $F|_{U_s} =0$.
    Then, 
	\Cref{l:mom} implies that $C>0$ exists such that, for every $k \ge k_0$
	\[
	|\sigma_{k-k_0}[F]|\leq Cs^{2(k-k_0)},
	\]
	 which suffices to conclude ii) after noticing that $\sigma_{k-k_0}[F] = \sigma_k[\vb_1-\vb_2]$.
	
	To prove the converse, note that ii) implies, in view of \eqref{e:diffpot} and \eqref{e:diffpot_2}, that $\sigma_k[F]=\cO(s^{2k})$ as $k\to\infty$. Then,
	\Cref{l:mom} implies $F|_{U_s} =0$. As a consequence, $\vb_1|_{U_s}=\vb_2|_{U_s}$ a.e. and \cite[Theorem~2]{Radial_Born} implies that $V_1|_{U_s}=V_2|_{U_s}$ as claimed.
\end{proof}

\section{Stability properties of the Born approximation}\label{sec:stability}

\subsection{Hölder stability}

In the Calderón problem one can prove  estimates of the kind
\begin{equation} \label{e:stab_calderon}
    \norm{\ga_1-\ga_2}_{L^q(\IB^d)} \le  \om_\cK \left( \norm{\La_{\ga_1}-\La_{\ga_2}}_{\cL(H^{1/2},H^{-1/2})} \right), \qquad \ga_1,\ga_2 \in \cK,
\end{equation}
where $\cK$ is a compact subset of $L^q(\IB^d)$, $1\le q\le \infty$, and $\om_\cK$ is a modulus of continuity \cite{Alessandrini88,Barcelo_Barcelo_Ruiz_01,Clop_Faraco_Ruiz_10, MR3062871}. Assuming the set $\cK$ only contains radial functions, this becomes
\begin{equation*}
    \norm{\ga_1-\ga_2}_{L^q(\IB^d)} \le  \om_\cK \left( \sup_{k\in \N}|\la_k[\ga_1]-\la_k[\ga_2]|  \right),  \qquad \ga_1,\ga_2 \in \cK.
\end{equation*}

Due to the strong continuity properties of the moment problem,  the previous estimate implies
\begin{equation} \label{e:log_mod}
    \norm{\ga_1-\ga_2}_{L^q(\IB^d)} \le  \om_\cK \left(\norm{\gb_1 -\gb_2}_{W^{2,1}(\IB^d)} \right),
\end{equation}
provided that $\ga_1,\ga_2 \in \cK \cap W^{2,p}(\IB^d)$, for some $p>d/2$, and that $\ga_1|_{\IS^{d-1}} = \ga_2|_{\IS^{d-1}}$. Indeed, by \Cref{mt:conductivity} and \eqref{e:moments_0} we have
\begin{align*}
    |\la_k[\ga_1]-\la_k&[\ga_2]|  \le  \frac{1}{2}\left|\sigma_k[ \Delta(\gb_1-\gb_2)]  \right| + \frac{1}{2}\left|\p_\nu \ga_1|_{\IS^{d-1}} -\p_\nu \ga_2|_{\IS^{d-1}} \right| \\
     &= \frac{1}{2}\left|\sigma_k[ \Delta(\gb_1-\gb_2)]  \right| + \frac{1}{2}\left|\sigma_0[ \Delta(\gb_1-\gb_2)] \right| \\
     &\le \sup_{k\in \N_0} \left|\sigma_k[ \Delta(\gb_1-\gb_2)] \right|  \le \norm{\Delta(\gb_1-\gb_2)}_{L^1(\IB^d)} \le \norm{\gb_1-\gb_2}_{W^{2,1}(\IB^d)}.
\end{align*}

It is well known that in general $\om_\cK$ in \eqref{e:stab_calderon} cannot be taken better than logarithmic when $\cK$ includes a ball in a Sobolev space, \cite{Mand00,KRS21}. \Cref{mt:stability-gamma} improves estimate \eqref{e:log_mod} to a Hölder modulus of continuity, and gets rid of the assumption $\ga_1|_{\IS^{d-1}} = \ga_2|_{\IS^{d-1}}$.

\subsection{Proof of \texorpdfstring{\Cref{mt:stability-gamma}}{Theorem 3}}

To get the estimate for the conductivity case, we use the following lemma, and two stability results for the Schrödinger problem from \cite{Radial_Born}, which in turn are based on stability properties of the $A$-amplitude \cite[Theorem 5.1]{Radial_Born}.
Let $0<s<1$ and recall the definition of $U_s$ in \eqref{e:us_def}.
\begin{lemma} \label{l:holder_conductivity}
    Let $K>1$, $N>0$, $d/2<p <\infty $, $0<s<1$. There exist some constants $C_{K,N,p,d,s},C_{K,N,p,d}>0$ such that,  for all $\ga_j \in W^{2,p}_\rad(\IB^d)$,  satisfying
    \begin{equation} \label{e:cond_lemma_ga}
        K^{-1} \le \ga_j(x) \le K, \qquad  \norm{\ga_j}_{W^{2,p}(\IB^d)} \le N, \quad j=1,2,
    \end{equation}
    then, if $a_j: = \ga_j|_{\IS^{d-1}}$, $b_j:= \frac{1}{2}\partial_\nu  \ga_j|_{\IS^{d-1}}$ one has the local estimate
    \begin{equation*}
       C_{K,N,p,d,s}^{-1}\norm{\ga_1-\ga_2}_{W^{2,1}(U_s)}  \le \norm{V_1-V_2}_{L^1(U_s)} + |a_1-a_2| +|b_1-b_2|.
    \end{equation*}
    and the global estimate
    \begin{equation*}
        C_{K,N,p,d}^{-1}\left(\norm{\ga_1-\ga_2}_{W^{2,1}(\IB^d)}  + \norm{\ga_1-\ga_2}_{H^1(\IB^d)} \right) \le  \norm{V_1 -V_2}_{L^1(\IB^d)} ^{\beta} + |a_1-a_2|,
    \end{equation*}
    for $\beta = \frac{p-r_d}{r_d(p-1)}$, with $r_d:=\frac{2d}{d+2}$ for $d \ge 3$, $r_2:= (p+1)/2$.
\end{lemma}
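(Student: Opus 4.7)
The key step is the Liouville-type substitution $w_j := \sqrt{\ga_j}$, which reduces the conductivity setting to a Schrödinger one. By \eqref{e:V_from_gamma}, each $w_j$ satisfies $-\Delta w_j + V_j w_j = 0$ in $\IB^d$, so the difference $u := w_1 - w_2$ obeys
\[
-\Delta u + V_1 u = (V_2 - V_1)\,w_2 \qquad \text{in } \IB^d,
\]
with Dirichlet trace $u|_{\IS^{d-1}} = \sqrt{a_1} - \sqrt{a_2}$ (a constant, since $\ga_j$ is radial) and Neumann trace $\p_\nu u|_{\IS^{d-1}}$ controlled by $|a_1-a_2|$ and $|b_1-b_2|$. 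The bounds \eqref{e:cond_lemma_ga} give $K^{-1/2} \le w_j \le K^{1/2}$ and $\|w_j\|_{W^{2,p}(\IB^d)} \le C_{K,N}$; together with the factorization $\ga_1 - \ga_2 = (w_1+w_2)\,u$, this lets me transfer any Sobolev estimate on $u$ into a comparable one on $\ga_1 - \ga_2$.

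For the local estimate I would pass to the radial profile $u_0(r)$, which solves the Sturm--Liouville equation $-u_0'' - (d-1)r^{-1}u_0' + V_1 u_0 = (V_2-V_1)w_2$ on $[s,1]$. Since $1/r \le 1/s$ there, I would rewrite it as a first-order Volterra system for $(u_0,u_0')$ with source $(V_2-V_1)w_2$ and Cauchy data at $r=1$ bounded by $|a_1-a_2|$ and $|b_1-b_2|$. A Gronwall argument, whose exponential weight is controlled by $\|V_1\|_{L^1([s,1])} \le C_{N,p,s}$ via Hölder's inequality, then yields
\[
\|u_0\|_{L^\infty([s,1])} + \|u_0'\|_{L^\infty([s,1])} \le C_{K,N,p,d,s}\bigl(\|V_1-V_2\|_{L^1(U_s)} + |a_1-a_2| + |b_1-b_2|\bigr).
\]
Substituting back into the ODE controls $\|u_0''\|_{L^1([s,1])}$ and hence $\|u\|_{W^{2,1}(U_s)}$; multiplying by $w_1+w_2$ completes the first bound.

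For the global estimate the coefficient $(d-1)/r$ prevents direct ODE integration down to the origin, so I would use an elliptic strategy combined with interpolation. Log-convexity of $L^q$ norms, together with $\|V_j\|_{L^p(\IB^d)} \le C_{K,N}$, gives
\[
\|V_1-V_2\|_{L^{r_d}(\IB^d)} \le C_{K,N,p,d}\,\|V_1-V_2\|_{L^1(\IB^d)}^\beta, \qquad \beta = \tfrac{p-r_d}{r_d(p-1)}.
\]
I would decompose $u = u_H + u_D$, with $u_H \equiv \sqrt{a_1}-\sqrt{a_2}$ the harmonic extension of the (constant) boundary value and $u_D$ the solution of $-\Delta u_D + V_1 u_D = (V_2-V_1)w_2 - V_1 u_H$ with zero Dirichlet trace. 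Calderón--Zygmund estimates for the Schrödinger operator $-\Delta + V_1$ with $V_1 \in L^p$, $p>d/2$, combined with the interpolation above, should yield
\[
\|u_D\|_{W^{2,r_d}(\IB^d)} \le C_{K,N,p,d}\bigl(\|V_1-V_2\|_{L^1(\IB^d)}^\beta + |a_1-a_2|\bigr).
\]
The exponents $r_d = 2d/(d+2)$ for $d\ge 3$ and $r_2=(p+1)/2$ are tuned precisely so that $W^{2,r_d}(\IB^d) \hookrightarrow W^{2,1}(\IB^d) \cap H^1(\IB^d)$; adding $u_H$ and applying the factorization $\ga_1-\ga_2 = (w_1+w_2)u$ then delivers the claimed global bound.

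The main obstacle is the elliptic-regularity step in the global estimate: it requires $-\Delta + V_1$ to be invertible with Dirichlet boundary conditions and its resolvent to map $L^{r_d}$ to $W^{2,r_d}$ with a constant uniform over $V_1$ satisfying $\|V_1\|_{L^p} \le C_{K,N}$. Since $V_1$ comes from a conductivity, \Cref{l:eigenvalues_cond} shows that the associated one-dimensional operator $H_{Q_{V_1}}$ has spectrum bounded below by $-\nu_d^2$ (and strictly positive for $d=2$), which, on each spherical-harmonic sector, translates into a spectral gap for the radial $-\Delta + V_1$; this should give the required uniform resolvent bounds depending only on $K,N,p,d$.
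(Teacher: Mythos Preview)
Your local argument is essentially identical to the paper's: reduce to the radial ODE on $[s,1]$, integrate as a Volterra system with Cauchy data at $r=1$, and close with Gronwall.

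For the global estimate your route differs from the paper's in a useful way to point out. You propose a one-step argument: solve $(-\Delta+V_1)u_D = (V_2-V_1)w_2 - V_1 u_H$ with zero Dirichlet data and invoke a Calder\'on--Zygmund estimate for the Schr\"odinger operator directly, then use $W^{2,r_d}\hookrightarrow H^1\cap W^{2,1}$. The paper instead proceeds in two elementary steps. First it rewrites the equation for $v=\sqrt{\ga_1}-\sqrt{\ga_2}$ in divergence form via the ground state transform $u=v\ga_1^{-1/2}$, obtaining $\nabla\!\cdot(\ga_1\nabla u)=\sqrt{\ga_1\ga_2}\,(V_1-V_2)$; uniform ellipticity $K^{-1}\le\ga_1\le K$ then gives an $H^1$ bound with constants depending only on $K$, with the right-hand side controlled in $H^{-1}$ by $\|V_1-V_2\|_{L^{r_d}}$ via Sobolev duality. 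Second, it feeds this $H^1$ control back into $-\Delta v = -V_1 v + \sqrt{\ga_2}(V_2-V_1)$ to bound $\Delta v$ in $L^{r_d}$ (here the choice $r_d=2d/(d+2)$, or $r_2=(p+1)/2$, is exactly what makes $\|V_1 v\|_{L^{r_d}}\le \|V_1\|_{L^p}\|v\|_{H^1}$ work), and then applies Calder\'on--Zygmund for the plain Laplacian.

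Both approaches are valid. The advantage of the paper's is that the uniform constants are manifest at every step: ellipticity bounds handle the $H^1$ estimate, and CZ for $-\Delta$ is standard. Your approach packages the same ingredients into a single resolvent bound for $-\Delta+V_1$, but then you must argue that this bound is uniform over $\|V_1\|_{L^p}\le C_{K,N}$. Your proposed justification via the one-dimensional spectral gap of $H_{Q_{V_1}}$ from \Cref{l:eigenvalues_cond} is more circuitous than necessary: a spectral gap gives $L^2\to L^2$ control, and lifting this to $L^{r_d}\to W^{2,r_d}$ still requires a bootstrap that essentially reproduces the paper's two-step scheme. The cleanest way to get your uniform resolvent bound is precisely the ground state transform $u_D\mapsto u_D/\sqrt{\ga_1}$, which converts $(-\Delta+V_1)$ with Dirichlet data into $-\nabla\!\cdot(\ga_1\nabla\,\cdot\,)$ with Dirichlet data --- i.e.\ the paper's first step.
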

The proof of the lemma is rather technical but standard.
For the sake of completeness, we include it in \Cref{sec:appendix}.

\begin{theorem}[Theorem 3 of \cite{Radial_Born}] \label{t:stability_local_V}
    Let $d\ge 2$, $d/2<p\le \infty $, and $0<s<1$. For all $N>|\IS^{d-1}|^{1/p}$, there exist  constants $C_{d,N,p,s}>0$ and $0<\delta_{d,p,s}<1$ such that, for all  $V_1,V_2 \in L^{p}_\rad (\IB^d;\R)$ satisfying
    \begin{equation} \label{e:pot_bound_local}
        \max_{j=1,2} \norm{V_j}_{L^p(U_s)} \le N, \qquad \qquad     \norm{\vb_1-\vb_2}_{L^1(U_s)} <\delta_{d,p,s} ,
    \end{equation}
    one has
     \begin{equation}  \label{e:loc_sta}
       \norm{V_1-V_2}_{L^1(U_s)} < C_{d,N,p,s} \, \norm{\vb_1-\vb_2}_{L^1(U_s)}^{\frac{p-1}{2p-1}}.
    \end{equation}
\end{theorem}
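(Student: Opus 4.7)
The plan is to reduce \Cref{t:stability_local_V} to its one-dimensional counterpart --the stability of the inverse problem $A_Q\mapsto Q$ on a finite interval of the half-line-- and then invoke the corresponding statement \cite[Theorem 5.1]{Radial_Born}. The reduction is carried out via the logarithmic change of variables $t = -\log|x|$, which is the same transformation that underlies both the identification \eqref{e:q_to_Q} of radial potentials $V$ with one-dimensional Schrödinger potentials $Q_V$, and the identity \eqref{e:A_to_VB} relating $\vb$ to the $A$-amplitude $A_{Q_V}$.

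First I would check that each of the quantities appearing in \eqref{e:pot_bound_local}--\eqref{e:loc_sta} translates into an equivalent one-dimensional norm on the interval $I_T := (0,T)$, with $T := -\log s$. Indeed, for any radial $f = f_0(|\cdot|)$ on $\IB^d$, setting $\tilde f(t) := e^{-2t} f_0(e^{-t})$ (which, by \eqref{e:q_to_Q} and \eqref{e:A_to_VB}, is $Q_V$ when $f=V$ and $A_{Q_V}$ when $f=\vb$), passing to polar coordinates and substituting yields
\begin{equation*}
    \norm{f}_{L^q(U_s)}^q = |\IS^{d-1}|\int_0^T e^{(2q-d)t}|\tilde f(t)|^q\,dt,\qquad 1\le q<\infty.
\end{equation*}
Since $T<\infty$, the exponential weights are bounded above and below by constants depending only on $d,q,s$, so these weighted norms are equivalent to the ordinary $L^q(I_T)$ norm of $\tilde f$. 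Consequently, hypothesis \eqref{e:pot_bound_local} becomes
\begin{equation*}
    \norm{Q_j}_{L^p(I_T)}\le N',\qquad \norm{A_{Q_1}-A_{Q_2}}_{L^1(I_T)} < \delta',
\end{equation*}
for $N',\delta'$ depending on $d,p,s,N,\delta_{d,p,s}$, and the conclusion \eqref{e:loc_sta} would follow, after translating back, from an estimate of the form
\begin{equation*}
\norm{Q_1-Q_2}_{L^1(I_T)}\le C\,\norm{A_{Q_1}-A_{Q_2}}_{L^1(I_T)}^{\frac{p}{2p-1}}.
\end{equation*}

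The second step is to apply \cite[Theorem 5.1]{Radial_Born}, which is precisely a Hölder stability estimate of this form on any finite interval of the half-line, valid once the smallness threshold on $\norm{A_{Q_1}-A_{Q_2}}_{L^1(I_T)}$ is met. Combined with the norm equivalences of the previous step, this yields \eqref{e:loc_sta} with a constant $C_{d,N,p,s}$ absorbing the weight factors introduced by the change of variables.

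The hard part, and where the bulk of the real work lies, is the proof of the one-dimensional stability invoked above. The map $Q\mapsto A_Q$ is governed by a Gelfand--Levitan-type Volterra equation; under the smallness hypothesis on $\norm{A_{Q_1}-A_{Q_2}}_{L^1(I_T)}$, its linearization around a reference potential can be inverted by a Neumann-series argument, producing Lipschitz stability of $Q_1-Q_2$ in an essentially $L^\infty$-type norm. The Hölder exponent $\frac{p}{2p-1}$ in the $L^1$ estimate then arises by interpolating this Lipschitz bound against the a priori $L^p$ bound on $Q_j$, which explains both its dependence on $p$ and its deterioration as $p\to\infty$. The careful execution of this argument, together with the verification that the thresholds can be chosen uniformly in the relevant parameters, is precisely what is done in \cite[Section 5]{Radial_Born}.
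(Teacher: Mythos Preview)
Your proposal is correct and matches the paper's treatment: the paper does not prove this theorem at all, but simply quotes it as Theorem~3 of \cite{Radial_Born}, noting explicitly that it is ``based on stability properties of the $A$-amplitude \cite[Theorem 5.1]{Radial_Born}.'' Your reduction via the logarithmic change of variables \eqref{e:q_to_Q}--\eqref{e:A_to_VB} to the one-dimensional stability estimate for $A_Q\mapsto Q$ is exactly the mechanism the paper alludes to, and your norm computation on $U_s\leftrightarrow I_T$ is correct.
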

\begin{theorem}[Theorem 4 of \cite{Radial_Born}] \label{t:stability_global_V}
    Let $d\ge 2$, $d/2<p< \infty $ and $M > d-1$. 
    There exists $C_{d,M,p}>0$ and  $0<\alpha<1$, depending only on $M,d,p$, such that for every $V_1,V_2 \in L^{p}_\rad (\IB^d;\R)$ satisfying
    \begin{equation} \label{e:def_pot_bound}
        \begin{aligned}
            &\max_{j=1,2} \norm{V_j}_{L^p(\IB^d)} < \frac{|\IS^{d-1}|^{\frac{1}{p}}}{4} \frac{p-d/2}{p-1} M ,  \\
            &\int_{\IB^d}|\vb_1(x)-\vb_2(x)| |x|^{M-(d-2)} \,dx< |\IS^{d-1}|,
        \end{aligned}
    \end{equation}
    one has
    \begin{equation} \label{e:glob_sta}
       \norm{V_1-V_2}_{L^1(\IB^d)} <C_{d,M,p}\left(\int_{\IB^d}|\vb_1(x)-\vb_2(x)||x|^{M-(d-2)}\, dx\right)^\alpha.
    \end{equation} 
\end{theorem}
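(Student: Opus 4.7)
The plan is to bootstrap the local Hölder stability \Cref{t:stability_local_V} into a global one via a scale-separation argument: I split $\IB^d$ into the boundary annulus $U_s$, where the local theorem applies at Hölder rate $p/(2p-1)$, and the inner ball $\{|x|<s\}$, where the weighted Born seminorm on the right-hand side of \eqref{e:glob_sta} is essentially blind and one instead controls the integral of $V_1-V_2$ via the a priori $L^p$ bound on the potentials. Optimizing the scale $s$ then extracts the global Hölder exponent.

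Setting $\eta:=\int_{\IB^d}|\vb_1(x)-\vb_2(x)||x|^{M-(d-2)}\,dx$, I would estimate the two pieces separately. For the inner ball, Hölder's inequality with conjugate exponents $p$ and $p/(p-1)$ together with the first bound in \eqref{e:def_pot_bound} yields
\[
\norm{V_1-V_2}_{L^1(\{|x|<s\})}\le \norm{V_1-V_2}_{L^p(\IB^d)}\,|B_s|^{(p-1)/p}\le C_1\, s^{d(p-1)/p},
\]
with $C_1$ depending only on $d,p,M$. For the outer annulus, since $M>d-2$ one has $|x|^{M-(d-2)}\ge s^{M-(d-2)}$ on $U_s$, so $\norm{\vb_1-\vb_2}_{L^1(U_s)}\le s^{-(M-d+2)}\eta$. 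The second bound in \eqref{e:def_pot_bound} ensures, for $s$ not too small, the smallness hypothesis \eqref{e:pot_bound_local} of \Cref{t:stability_local_V}, which then gives
\[
\norm{V_1-V_2}_{L^1(U_s)}\le C_{d,N,p,s}\bigl(s^{-(M-d+2)}\eta\bigr)^{p/(2p-1)}.
\]

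The main obstacle is quantifying the dependence of $C_{d,N,p,s}$ on $s$, obtained by a careful reading of the proof of \Cref{t:stability_local_V} (which itself traces back, via the identifications \eqref{e:q_to_Q} and \eqref{e:A_to_VB}, to the 1D stability estimates for recovering a Schrödinger potential on the half-line from its $A$-amplitude developed in \cite[Section 5]{Radial_Born}). Provided this dependence is polynomial, say $C_{d,N,p,s}\le C s^{-\kappa}$ with $\kappa=\kappa(d,p,N)>0$, combining the two bounds yields
\[
\norm{V_1-V_2}_{L^1(\IB^d)}\le C_2\bigl(s^{-A}\eta^{p/(2p-1)}+s^{d(p-1)/p}\bigr),\qquad A:=\kappa+\tfrac{(M-d+2)p}{2p-1}.
\]
Balancing the two $\eta$-powers by the choice $s=\eta^{\mu}$ with $\mu=\frac{p/(2p-1)}{A+d(p-1)/p}$ then produces the desired estimate \eqref{e:glob_sta} with explicit exponent $\alpha=\mu\cdot d(p-1)/p$, which lies in $(0,1)$ since $p>1$ and $A,\kappa>0$. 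Without the polynomial scaling of the local stability constant in $s$, only a logarithmic global modulus of continuity would result; securing that scaling is the heart of the argument.
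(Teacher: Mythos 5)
First, note that this paper does not actually prove \Cref{t:stability_global_V}: it is quoted verbatim from \cite{Radial_Born} (Theorem 4 there), where the argument goes directly through exponentially weighted stability estimates for the $A$-amplitude on the half-line (via \eqref{e:q_to_Q} and \eqref{e:A_to_VB}, the weight $|x|^{M-(d-2)}$ becomes an exponential weight $e^{-ct}$ in $t=-\log|x|$, which is why the threshold $M>d-1$ and the specific potential bound in \eqref{e:def_pot_bound} appear). Your scale-separation bootstrap from \Cref{t:stability_local_V} is therefore a genuinely different route, and its skeleton is sound: the inner-ball estimate via Hölder, the bound $\norm{\vb_1-\vb_2}_{L^1(U_s)}\le s^{-(M-d+2)}\eta$, and the reduction to small $\eta$ (for $\eta$ bounded below, \eqref{e:glob_sta} is trivial from the a priori $L^p$ bounds) are all correct.

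The gap is that the entire argument hinges on a quantitative strengthening of \Cref{t:stability_local_V} that you assume rather than prove, and you only flag half of it. The local theorem carries \emph{two} $s$-dependent quantities: the constant $C_{d,N,p,s}$ and the smallness threshold $\delta_{d,p,s}$. You discuss polynomial control of the former, but with the choice $s=\eta^\mu$ the hypothesis $\norm{\vb_1-\vb_2}_{L^1(U_s)}<\delta_{d,p,s}$ becomes $\eta^{1-\mu(M-d+2)}<\delta_{d,p,\eta^\mu}$, which fails as $\eta\to0$ unless $\delta_{d,p,s}$ also degenerates at most polynomially in $s$ \emph{and} $\mu$ is chosen small enough to respect this constraint --- a condition that may be incompatible with the balancing value of $\mu$ you selected, forcing a smaller (but still positive) exponent $\alpha$. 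Neither polynomial bound can be verified inside this paper, since \Cref{t:stability_local_V} is itself only cited; tracing the $s$-dependence back to \cite[Theorem~5.1]{Radial_Born} (a Gronwall-type estimate on $[0,|\log s|]$, which plausibly yields constants of size $s^{-C(N)}$) is, as you say, the heart of the matter, and as written your proof is conditional on it. Without that input the optimization over $s$ only delivers a logarithmic modulus of continuity, which is strictly weaker than \eqref{e:glob_sta}.
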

\begin{lemma}[Lemma 5.6 of \cite{Radial_Born}] \label{l:vb_bound}
    Let $d\ge 2$, $M>d-2$, and  $d/2<p<\infty$. There exists a constant $\beta(d,p)>0$ such that if  $M\ge \beta(d,p) \norm{V}_{L^p(\IB^d)} $, then 
    \begin{equation*}
        \int_{\IB^d}|\vb(x)| |x|^{M-(d-2)} \,dx \le 2|\IS^{d-1}|\beta(d,p) \norm{V}_{L^p(\IB^d)}.
    \end{equation*}
\end{lemma}

\begin{proof}[Proof of \Cref{mt:stability-gamma}]   
   Since the conductivities are radial and \eqref{e:traces} holds, by \Cref{l:w_rad}~i) we have for some $C>0$ that
    \begin{equation} \label{e:boundary_W21}
        |a_1 - a_2| + |b_1-b_2| \le C\norm{\gb_1-\gb_2}_{W^{2,1}(\IB^d)},
    \end{equation}
     where  $a_j: = \ga_j|_{\IS^{d-1}}$, $b_j:= \frac{1}{2} \partial_\nu  \ga_j|_{\IS^{d-1}}$. Locally, the same bound holds:
    \begin{equation} \label{e:boundary_W21_2}
        |a_1 - a_2| + |b_1-b_2| \le \frac{C(d,s)}{1-s}\norm{\gb_1-\gb_2}_{W^{2,1}(U_s)}.
    \end{equation}
     Using the previous estimates, the theorem follows combining \Cref{l:holder_conductivity} with Theorems \ref{t:stability_local_V} and \ref{t:stability_global_V} as we now show.
      
     Conditions \eqref{e:ga_conditions} on $\ga_j$, $j=1,2$ imply a corresponding bound for $\norm{V_j}_{L^{p}}$ using  that $\Delta \sqrt{\ga_j} = V_j \sqrt{\ga_j}$, so the first estimates in \eqref{e:pot_bound_local} and \eqref{e:def_pot_bound} hold.
     We now consider the global and local cases separately.
     
    \textit{Proof of local estimate i)}.  
    First we claim that for any given $0<\varepsilon<1$, there exists a $0<\delta<1$ (depending on $\varepsilon,d,K,N,s$) such that $\norm{\vb_1-\vb_2}_{L^1(U_s)} <\varepsilon $ holds  if $\norm{\gb_1-\gb_2}_{W^{2,1}(\IB^d)} < \delta$ holds. 
    This can be proved using \eqref{e:from_comp} together with $ K^{-1} \le \ga_j(x) \le K$ which imply that
     \begin{multline*}
          \norm{\vb_1-\vb_2}_{L^1(U_s)} = \norm{(2a_1)^{-1}(2a_1\vb_1-2a_2\vb_2) + a_2\vb_2(a_1^{-1}-a_2^{-1})}_{L^1(U_s)} \\ \le  \frac{K}{2} \norm{\Delta \gb_1- \Delta\gb_2 }_{L^1(U_s)} + |a_1^{-1}-a_2^{-1}| |a_2|\norm{\vb_2 }_{L^1(U_s)}, 
     \end{multline*}
     we now use that $|a_1^{-1}-a_2^{-1}| \le C_K|a_1-a_2|$, since $K^{-1} \le a_j \le K$, \Cref{l:vb_bound}  to bound the norm of $\vb_2$ (the weight in the lemma is harmless since we are in $U_s$), and \eqref{e:boundary_W21_2}, to obtain
     \begin{equation} \label{e:differences_born}
          \norm{\vb_1-\vb_2}_{L^1(U_s)}\le C(d,K,N,s)  \norm{ \gb_1- \gb_2 }_{W^{2,1}(U_s)}, 
     \end{equation}
     which proves the claim.  This proves that \eqref{e:pot_bound_local} holds, so that the conditions to apply \Cref{t:stability_local_V} are met. 
     Inserting \eqref{e:differences_born} and the local estimate in \Cref{l:holder_conductivity} in \eqref{e:loc_sta} yields
    \begin{equation*} 
       \norm{\ga_1-\ga_2}_{W^{2,1}(U_s)} < C_{N,K,d,p,s} \, \norm{\gb_1-\gb_2}_{W^{2,1}(U_s)}^{\frac{p-1}{2p-1}} +  |a_1 - a_2| + |b_1-b_2|.
    \end{equation*}
    Using \eqref{e:boundary_W21_2} finishes the proof of the estimate.
    
    \textit{Proof of global estimate ii)}. Let $M>d-1$.
    As in the local case we have
    \begin{multline*}
         \int_{\IB^d}|\vb_1(x)-\vb_2(x)| |x|^{M-(d-2)} \,dx   \le  \frac{K}{2} \int_{\IB^d}|\Delta \gb_1(x)- \Delta\gb_2(x)| |x|^{M-(d-2)} \,dx  \\  + |a_1^{-1}-a_2^{-1}| |a_2| \int_{\IB^d}|\vb_2(x)| |x|^{M-(d-2)} \,dx , 
     \end{multline*}
      By \Cref{p:Vb_space}~iv) and \eqref{e:from_comp}, $\Delta \gb_j \in L^1(\IB^d)$ for $j=1,2$. Hence $|x|^{M-(d-2)}$ can be removed in the first integral in the RHS since $M>d-2$. Using \Cref{l:vb_bound},  \eqref{e:ga_conditions}, \eqref{e:boundary_W21}  and the same arguments used to prove \eqref{e:differences_born} we obtain
         \begin{equation} \label{e:differences_born_2}
          \int_{\IB^d}|\vb_1(x)-\vb_2(x)| |x|^{M-(d-2)} \,dx 
          \le C(d,K,N,M)  \norm{ \gb_1- \gb_2 }_{W^{2,1}(\IB^d)},
     \end{equation}
     for $M$ large enough. We can now fix the value of $M$ (the size of $M$ depends on the bounds on $\norm{V_j}_{L^p(\IB^d)}$, and hence on $N$ that controls the size of conductivities).
     Therefore, there exists a $\delta$ such that, if $\norm{\gb_1-\gb_2}_{W^{2,1}(\IB^d)} < \delta$ holds, then  \eqref{e:def_pot_bound} holds. 
     This proves that the conditions to apply \Cref{t:stability_global_V} are met.
     Inserting \eqref{e:differences_born_2} and the global estimate of \Cref{l:holder_conductivity} in \eqref{e:glob_sta} yields
    \begin{equation*}
       \norm{\ga_1-\ga_2}_{H^1(\IB^d)} + \norm{\ga_1-\ga_2}_{W^{2,1}(\IB^d)} < C_{N,K,d,p} \, \norm{\gb_1-\gb_2}_{W^{2,1}(\IB^d)}^{\alpha \beta} +  |a_1 - a_2|.
    \end{equation*}
    The Hölder exponent of this estimate is just the product of the one of \Cref{t:stability_global_V}, with the one of \Cref{l:holder_conductivity}. 
    Finally, one can use \eqref{e:boundary_W21} to bound $|a_1-a_2|$, and that, for all $1<q<p$,
    \begin{equation*}
        \norm{\ga_1-\ga_2}_{W^{2,q}(\IB^d)}\le C_{N,d,p,q}\norm{\ga_1-\ga_2}_{W^{2,1}(\IB^d)}^{\frac{p-q}{q(p-1)}},
    \end{equation*}
    by interpolation, since $\norm{\ga_j}_{W^{2,p}(\IB^d)} \le N$, with $j=1,2$. This finishes the proof of the Theorem. 
\end{proof}

 \appendix

\section{Proof of \texorpdfstring{\Cref{l:holder_conductivity}}{Lemma 4.1}} \label{sec:appendix}

    \textit{Global estimate:} 
    In this proof, $C$ will always denote a constant that depends on $d,p,K,N$.
    
    Let $v= \sqrt{\ga_1}-\sqrt{\ga_2}$. Then, by \eqref{e:V_to_gamma} one has that $-\Delta v +V_1v = -\sqrt{\ga_2} (V_1-V_2) $. Thus, if $u: = v\ga_1^{-1/2}$, then $\nabla \cdot (\ga_1 \nabla u) = \sqrt{\ga_1\ga_2} (V_1-V_2),$ with $u|_{\IS^{d-1}} = 1-\sqrt{\ga_2/\ga_1}$. By the uniform bounds in the ellipticity of $\ga_j$, we have
    \begin{equation*}
        C^{-1}\norm{1-\sqrt{\ga_2/\ga_1}}_{H^1(\IB^d)} \le  \norm{V_1-V_2}_{H^{-1}(\IB^d)} + \norm{1-\sqrt{a_2/a_1}}_{H^{1/2}(\p \IB^d)}.
    \end{equation*}
    Using that the trace at the boundary is constant, and defining $r_d:=\frac{2d}{d+2}$ for $d \ge 3$, $r_2:= (p+1)/2$, we obtain by the Sobolev embedding that
    \begin{equation*}
        C^{-1}\norm{1-\sqrt{\ga_2/\ga_1}}_{H^1(\IB^d)} \le  \norm{V_1-V_2}_{L^{r_d}(\IB^d)} + |1-\sqrt{a_2/a_1}|,
    \end{equation*}
    which yields
    \begin{equation*}
        C^{-1}\norm{\sqrt{\ga}_1-\sqrt{\ga_2}}_{H^1(\IB^d)}  \le  \norm{V_1-V_2}_{L^{r_d}(\IB^d)} + |a_1-a_2|.
    \end{equation*}
    Now, from \eqref{e:cond_lemma_ga} one gets
    \begin{equation*}
        \norm{V_j}_{L^p(\IB^d)} \le C_K N.
    \end{equation*}
    We use this and interpolation to go from $r_d$ to $1$ in the RHS: 
    \begin{equation} \label{e:first_est_lem}
        C^{-1}\norm{\sqrt{\ga}_1-\sqrt{\ga_2}}_{H^1(\IB^d)} \le  \norm{V_1-V_2}_{L^1(\IB^d)}^{\beta_d} + |a_1-a_2|,
    \end{equation}
    where $\beta = \frac{p-r_d}{r_d(p-1)}$.

    We now want to show that $\Delta v \in L^r(\IB^d)$ for some $1<r<p$.   First we need to estimate the $vV_1$ term. If $1/q := 1/r-1/p$
    \begin{equation*}
        \norm{vV_1}_{L^r} \le \norm{V_1}_{L^p(\IB^d)} \norm{v}_{L^q(\IB^d)} \le C\norm{V_1}_{L^p(\IB^d)} \norm{v}_{H^1(\IB^d)},
    \end{equation*}
    by the Sobolev embedding, provided we can choose $1/q> 1/2-1/d$. This gives us the condition $ 1/r-1/p >1/2-1/d$, which is satisfied for any $1<r<p$ if $d=2$, or for $1<r \le 2d/(2+d)$ when $d\ge 3$. In particular we can choose again $r=r_d$, which yields
    \begin{equation*}
        \norm{\Delta (\sqrt{\ga}_1-\sqrt{\ga_2})}_{L^{r_d}(\IB^d)} \le \norm{V_1}_{L^p(\IB^d)}\norm{\sqrt{\ga}_1-\sqrt{\ga_2}}_{H^1(\IB^d)} + \sqrt{K} \norm{V_1-V_2}_{L^{r_d}(\IB^d)}.
    \end{equation*}
    so the same interpolation as before in the RHS gives
    \begin{equation*}
        C^{-1}\norm{\Delta (\sqrt{\ga}_1-\sqrt{\ga_2})}_{L^{r_d}(\IB^d)} \le \norm{\sqrt{\ga}_1-\sqrt{\ga_2}}_{H^1(\IB^d)} +  \norm{V_1-V_2}_{L^1(\IB^d)}^{\beta}.
    \end{equation*}
    Since $r_d>1$, by Calderón-Zygmund estimates we have
    \begin{equation*}
        \norm{ \sqrt{\ga}_1-\sqrt{\ga_2} -(\sqrt{a_1}-\sqrt{a_2})}_{W^{2,r_d}(\IB^d)} \le C\norm{\Delta (\sqrt{\ga}_1-\sqrt{\ga_2})}_{L^{r_d}(\IB^d)},
    \end{equation*}
    where we have subtracted the constant function $\sqrt{a_1}-\sqrt{a_2}$ to get a function with zero Dirichlet datum in the norm in the LHS.
    Putting the previous two estimates together, it follows that
    \begin{equation*}
        C^{-1} \norm{ \sqrt{\ga}_1-\sqrt{\ga_2}}_{W^{2,r_d}(\IB^d)} \le \norm{\sqrt{\ga}_1-\sqrt{\ga_2}}_{H^1(\IB^d)} +  \norm{V_1-V_2}_{L^1(\IB^d)}^\beta + |a_1-a_2|.
    \end{equation*}
    Combining this estimate with \eqref{e:first_est_lem} gives
    \begin{equation*}
        C^{-1} \left(  \norm{\sqrt{\ga}_1-\sqrt{\ga_2}}_{H^1(\IB^d)} +\norm{\sqrt{\ga}_1-\sqrt{\ga_2}}_{W^{2,1}(\IB^d)}  \right)  \le \norm{V_1-V_2}_{L^1(\IB^d)}^\beta + |a_1-a_2|.
    \end{equation*}
    To finish, we use that there exists a positive constant $C= C_{M,p,d}$ such that
    \begin{multline} \label{e:sqrt_removal}
       \norm{{\ga}_1-{\ga_2}}_{H^1(\IB^d)} +\norm{{\ga}_1-{\ga_2}}_{W^{2,1}(\IB^d)} \\ \le  C \left( \norm{\sqrt{\ga}_1-\sqrt{\ga_2}}_{H^1(\IB^d)} +\norm{\sqrt{\ga}_1-\sqrt{\ga_2}}_{W^{2,1}(\IB^d)} \right).
    \end{multline}
    This follows from the following two facts. First, $\ga_1 -\ga_2 = \psi (\sqrt{\ga}_1-\sqrt{\ga_2})$, where $\psi: = \sqrt{\ga}_1+\sqrt{\ga_2}$ belongs to $W^{2,p}(\IB^d)$. Second, if $U$ is an open bounded set, one can verify that multiplication by a function in $W^{2,p}(U)$ with $p>d/2$ is bounded in both $W^{2,1}(U)$ and $H^1(U)$.

     \textit{Local estimate:}  
     Let $I_s:= [s,1]$. 
     If we define the radial profiles $u_j(r)$ and $q_j(r)$ by the identities $q_j(|x|) = V_j(x)$ and $u_j(|x|) = \sqrt{\ga_j(x)}$, we have that
     \begin{equation*}
        -r^{-(d-1)}\p_r ( r^{d-1} \p_r u_j(r)) + q_j(r) u_j(r) = 0.
     \end{equation*}
     If $w := u_1-u_2$ it follows that
    \begin{equation*}
        (r^{d-1} w')'  = r^{d-1}g, \qquad g : = q_1 w + (q_1-q_2)u_2.
     \end{equation*}
    Therefore
     \begin{equation} \label{e:local_w}
         w'(r) = r^{1-d} \left(w'(1)- \int_r^1 t^{d-1} g(t) \, dt \right), \qquad w(r) = w(1) - \int_r^1 w'(t) \, dt.
     \end{equation}
     Thus
     \begin{equation*}
         \norm{w'}_{L^1(I_s)} \le C_{d,s}\left(|w'(1)|+\norm{g}_{L^1(I_s)}\right), \qquad \norm{w''}_{L^1(I_s)} \le C_{d,s}\left(|w'(1)|+\norm{g}_{L^1(I_s)}\right),
     \end{equation*}
     since $w''(r) = \frac{d-1}{r}  w'-g$.
     As a consequence, we have
     \begin{equation*}
         \norm{g}_{L^1(I_s)} \le\norm{w}_{L^\infty(0,1)}\norm{q_1}_{L^1(I_s)} + \sqrt{K} \norm{q_1-q_2}_{L^1(I_s)}.
     \end{equation*}
    It remains to estimate $\norm{w}_{L^\infty(I_s)}$, which can be achieved using Grönwall's inequality. Combining the identities in \eqref{e:local_w} and changing the order of integration, one obtains
    \begin{equation*}
        w(r) = w(1) + w'(1) h(r) + \int_r^1 J(r,t)(q_1(t) w(t) +(q_1(t)-q_2(t))u_2(t)) \, dt.
    \end{equation*}
    Since $h$ and $J$ are uniformly bounded, respectively, in $I_s$ and $I_s\times I_s$, one gets
    \begin{equation*}
        C^{-1}|w(r)| \le |w(1)| + |w'(1)|  + \norm{q_1-q_2}_{L^1(I_s)} +  \int_r^1 |w(t)||q_1(t)| \, dt.
    \end{equation*}
    By Grönwall's inequality, it follows that
    \begin{equation*}
        C^{-1}|w(r)| \le \left(|w(1)| + |w'(1)|  + \norm{q_1-q_2}_{L^1(I_s)}\right) \exp \left(\norm{q_1}_{L^1(I_s)} \right),
     \end{equation*}
     so that
     \begin{equation*}
        C^{-1}\norm{w}_{L^\infty} \le |w(1)| + |w'(1)|+ \norm{q_1-q_2}_{L^1(I_s)}.
     \end{equation*}
     Combining the previous estimates, we have shown that
      \begin{equation*}
        C^{-1}\left(  \norm{w}_{L^1(I_s)}+ \norm{w'}_{L^1(I_s)} + \norm{w''}_{L^1(I_s)} \right) \le |w(1)| + |w'(1)|+ \norm{q_1-q_2}_{L^1(I_s)}.
     \end{equation*}
     To finish, one can use the same argument as in \eqref{e:sqrt_removal}.

\bibliographystyle{myalpha}

\bibliography{references_radial_calderon}

\end{document}